\documentclass[10 pt]{amsart}

\usepackage[T1]{fontenc}
\usepackage[utf8]{inputenc}

\usepackage{xypic}
\usepackage{amsmath}
\usepackage{mathdesign}

\usepackage{enumerate}

\usepackage{amssymb}
\usepackage{dsfont}
\usepackage{tikz-cd}

\usepackage{extsizes}
\usepackage{capt-of}

\usepackage{lipsum}
\usepackage{footnote}

\usepackage[colorlinks]{hyperref}

\usepackage{fancyhdr}

\xyoption{all}

\setcounter{tocdepth}{1}

\newif\iffurther
\furthertrue

\newtheorem{thm}{Theorem}[section]

\newtheorem{cor}[thm]{Corollary}
\newtheorem{defn}[thm]{Definition}

\newtheorem{exmpl}[thm]{Example}
\newtheorem{lem}[thm]{Lemma}
\newtheorem{prop}[thm]{Proposition}

\newtheorem{rem}[thm]{Remark}

\def\[{\left[}
\def\]{\right]}

\long\def\forget#1\forgotten{{}}

\title[On the complexity of subshifts and infinite words]{On the complexity of subshifts and infinite words}

\author{Be'eri Greenfeld} \address{Department of Mathematics and Statistics, Hunter College at the City University of New York, 695 Park Avenue New York, NY 10065, USA} \email{beeri.greenfeld@hunter.cuny.edu}

\author{Carlos Gustavo Moreira} \address{SUSTech International Center for Mathematics, Shenzhen, Guangdong,
People’s Republic of China;
IMPA, Estrada Dona Castorina 110, 22460-320, Rio de Janeiro, Brazil} \email{gugu@impa.br}

\author{Efim Zelmanov} \address{SUSTech International Center for Mathematics, Shenzhen, Guangdong,
People’s Republic of China} \email{efim.zelmanov@gmail.com}

\begin{document}

\maketitle

\begin{abstract}
We characterize the complexity functions of subshifts up to asymptotic equivalence. The complexity function of every aperiodic subshift is non-decreasing, submultiplicative and grows at least linearly. We prove that conversely, every function satisfying these conditions is asymptotically equivalent to the complexity function of a recurrent subshift, equivalently, a recurrent infinite word. Our construction is explicit, algorithmic in nature and is philosophically based on constructing certain `Cantor sets of integers', whose `gaps' correspond to blocks of zeros.

We also prove that every non-decreasing submultiplicative function is asymptotically equivalent, up to a linear error term, to the complexity function of a minimal subshift.
\end{abstract}

\section{Introduction}

Let $\Sigma$ be a finite alphabet. A subshift $\mathcal{X}$ is a non-empty, closed, shift-invariant subspace of $\Sigma^{\mathbb{N}}$. The complexity function of $\mathcal{X}$ counts finite factors, that is, finite words which occur as subwords of $\mathcal{X}$:
$$
p_\mathcal{X}(n) = \# \{ \text{Factors of}\ \mathcal{X}\ \text{of length}\ n\} = \# \{u\in \Sigma^n\ |\ \exists w\in \mathcal{X}\ \text{s.t.} \ w = \cdots u \cdots \}. 
$$

An important special case is when $\mathcal{X}$ is a transitive subshift, that is, the closure of the shift-orbit of a single infinite word: $$ w=w_0 w_1 w_2 \cdots \in \Sigma^{\mathbb{N}}. $$ 
In this case, the complexity function counts the finite factors of $w$, and is denoted:
$$
p_w(n) = \# \{ \text{Factors of}\ w\ \text{of length}\ n\} =  \# \{ w_i \cdots w_{i+n-1}\ |\ i=0,1,2,\dots \}.
$$
It has been a long-standing open question to characterize the complexity functions of subshifts and specifically of infinite words \cite{Cas, Fer99, Fer96} (Cassaigne \cite{Cas} describes this problem as ``challenging and probably very difficult''), or even only the possible orders of magnitudes of complexity functions of infinite words \cite{MM1}.

The complexity function of every subshift is:
\begin{enumerate}[(i)]
    \item Non-decreasing: $p_{\mathcal{X}}(n)\leq p_{\mathcal{X}}(n+1)$
    \item Submultiplicative: $p_\mathcal{X}(n+m)\leq p_{\mathcal{X}}(n)p_{\mathcal{X}}(m)$
    \item Either bounded ($p_{\mathcal{X}}(n)\leq C$) or at least linear ($p_{\mathcal{X}}(n)\geq n+1$), by the Morse-Hedlund theorem \cite{MH38}.
\end{enumerate}

Many partial results have been proven in the converse direction, namely, realizing functions satisfying the above conditions as complexity functions of infinite words, see \cite{Alo, Cas2, Cas, Cas3, Fer99, Fer96} and references therein. Notice that the above conditions are not sufficient for a function to be realizable as the complexity function of an infinite word (see \cite{Cas3}).

Our main result shows that the three aforementioned conditions are in fact sufficient for a function to be asymptotically equivalent as the complexity function of a subshift. 
Let $f,g\colon \mathbb{N}\rightarrow \mathbb{N}$ be non-decreasing functions. We write $f\preceq g$ if $f(n)\leq Cg(Dn)$ for some constants $C,D>0$, and $f\sim g$ (asymptotically equivalent) if both $f\preceq g$ and $g\preceq f$. 
The notion of asymptotic equivalence is fundamental in large scale geometry, geometric group theory and combinatorial algebra.  In particular, growth functions of groups, semigroups, associative algebras, and Lie algebras are independent of the choice of generating set only up to asymptotic equivalence. For related results, see \cite{BZ, Gri2, Gromov, Gromov2, SB, Tro}.
In the context of complexity functions, asymptotic equivalence identifies `weighted' complexity functions within the same subshift, allowing for assigning arbitrary positive real weights to each alphabet letter.

\begin{thm} \label{thm:main}
Let $f\colon \mathbb{N}\rightarrow \mathbb{N}$ be a non-decreasing function, satisfying $f(n)\geq n+1$ and $f(2n)\leq f(n)^2$ for every $n\in \mathbb{N}$. Then there exists an infinite word $w$ such that $f\sim p_w$. Moreover, $w$ can be taken to be a recurrent word.
\end{thm}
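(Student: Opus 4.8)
The plan is to explicitly construct a recurrent infinite word $w$ over a suitable alphabet whose factor-counting complexity $p_w(n)$ is asymptotically equivalent to the prescribed function $f$. The abstract's hint---``Cantor sets of integers, whose gaps correspond to blocks of zeros''---suggests the following architecture: I would work over the binary alphabet $\{0,1\}$ (or a small fixed alphabet) and encode the growth of $f$ into the placement of $1$'s along $\mathbb{N}$, separating them by carefully chosen blocks of $0$'s. The intuition is that a word which is mostly zeros but contains rare, controlled ``data'' patterns has a complexity function governed by how many distinct windows of length $n$ can be seen, and the block-of-zeros gaps let me throttle this count precisely. Concretely, I expect to build a nested hierarchy of finite words $B_1, B_2, B_3, \dots$ (a sequence of ``blocks'' at successive scales), where $B_{k+1}$ is assembled from copies of $B_k$ interspersed with padding zeros, and the number of copies and the padding lengths at stage $k$ are chosen as a function of the ratios $f(2^{k+1})/f(2^k)$ so that the number of length-$n$ factors introduced at scale $k$ matches the prescribed increment of $f$.

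The first concrete step is to reduce to a clean normalized target. Since $f$ is non-decreasing, submultiplicative in the dyadic sense ($f(2n)\le f(n)^2$), and at least linear, I would pass to the dyadic values $a_k := f(2^k)$ and record the constraints $a_{k+1}\le a_k^2$ and $a_k \ge 2^k+1$, together with monotonicity. Asymptotic equivalence $f\sim p_w$ only needs to be checked along a geometric scale and then interpolated, so it suffices to control $p_w(2^k)$ up to multiplicative constants and fixed dilation of the argument; the submultiplicativity of $p_w$ itself (condition (ii), automatic for any word) will fill in the intermediate lengths. The second step is the inductive block construction: I would define $B_{k+1}$ to be a concatenation of $m_k$ shifted/padded copies of $B_k$, choosing $m_k$ so that the newly visible length-$(\approx 2^k)$ factors number roughly $a_{k+1}/a_k$, and choosing the zero-padding lengths long enough that factors from different scales do not accidentally coincide or create uncontrolled overlaps. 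The limiting word $w = \lim_k B_k$ (ensuring each $B_k$ is a prefix of $B_{k+1}$) is then the candidate.

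The third step is the two-sided complexity estimate. For the upper bound $p_w \preceq f$, I would argue that every length-$n$ factor is determined by which scale's structure it straddles plus a bounded amount of positional data, so the count is dominated by a constant times $a_k$ for $n\approx 2^k$. For the lower bound $f \preceq p_w$, I need to exhibit enough genuinely distinct factors at each scale, which is where the freedom in the copies of $B_k$ (distinguished by their internal padding patterns, i.e. the ``Cantor set'' positions of the $1$'s) pays off: distinct copies must yield distinct windows. Finally, recurrence: a word is recurrent iff every factor that occurs, occurs infinitely often. Because the block construction is self-similar---every $B_k$ reappears inside every later $B_{k'}$ for $k'>k$, and $w$ is the increasing union---each finite factor recurs, so $w$ is recurrent essentially by design; this step I expect to be short once the construction is fixed.

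The main obstacle, I anticipate, is the lower bound combined with submultiplicativity in a \emph{nondyadic} regime. It is easy to make $p_w(2^k)$ land in the right window, but one must ensure that between consecutive scales the complexity does not overshoot $f$ (violating the upper bound $p_w\preceq f$) nor dip below it (violating the lower bound), given only the weak constraint $f(2n)\le f(n)^2$ relating successive dyadic values. In particular, the padding lengths must be tuned so that the number of distinct factors grows smoothly---neither jumping by the full squaring budget when $f$ grows slowly, nor stalling when $f$ grows fast---and one must prove that overlaps between a copy of $B_k$ and the surrounding zeros, and between adjacent copies, contribute only $O(a_k)$ extra factors rather than a multiplicative blow-up. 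Handling these boundary/overlap factors, and verifying that the chosen $m_k$ and padding sizes are simultaneously compatible with both inequalities for every $k$ (not just asymptotically), is where the bulk of the careful bookkeeping will lie.
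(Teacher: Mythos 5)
Your overall architecture---hierarchical blocks $B_{k+1}$ built from copies of $B_k$ separated by blocks of zeros, a limit word, dyadic-scale bookkeeping interpolated by monotonicity---is genuinely the paper's philosophy, and your reduction to controlling $p_w$ along a geometric sequence of lengths is sound (it is the paper's Lemma \ref{lem:enough n_k}). But there is a concrete quantitative error at the heart of your upper bound, and it propagates. You claim a length-$n$ factor is determined by the scale it straddles ``plus a bounded amount of positional data,'' so that $p_w(n)\leq C a_k$ for $n\approx 2^k$, with overlaps contributing only $O(a_k)$ additive error. This is false: if $\alpha$ is a scale-$k$ pattern not beginning or ending in $0$ and flanked by zero runs of length $\geq n_k$, then the windows $0^{j}\alpha 0^{n_k-j}$, $j=0,\dots,n_k$, are pairwise distinct factors of length $2n_k$. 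Positional data is thus a \emph{multiplicative} factor of order $n$, not an additive $O(a_k)$ term. Consequently, if you tune the number of distinct scale-$k$ patterns to be $\approx a_k=f(2^k)$ (as your ratio-based choice of $m_k$ does), you realize $p_w(n)\asymp n f(n)$, which is not asymptotically equivalent to $f$---no constants $C,D$ give $nf(n)\leq Cf(Dn)$ when, e.g., $f(n)=n+1$. Indeed $nf(n)$ is exactly the error the paper concedes in Proposition \ref{prop:minimal} and works hard to avoid in Theorem \ref{thm:main}: the construction there maintains the invariant (``balancedness'') $\tfrac13 f(\lfloor n_k/3\rfloor)\leq 2n_k s_k\leq f(n_k)$, i.e., the number $s_k$ of distinct blocks must track $f(n_k)/n_k$, not $f(n_k)$.

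Once you divide by $n$, two further obstacles appear that your sketch flags but has no mechanism for, and they force the paper's case analysis. First, in the fast-growth regime ($f(2n)=f(n)^2$ nearly attained) you must square the block count, $s_{k+1}=s_k^2$, while the length barely doubles; this requires zero-gaps whose lengths grow slowly enough that $n_{k+i}\approx 2^i n_k$ (the paper takes gaps $\lceil n_t/t^2\rceil$ so that $\prod_i(1+g_{k+i+1}/2n_{k+i})$ converges), and---for the upper bound---the set of admissible gap lengths below $n$ must be $O(\log n)$-sparse, or else factors of the form $\alpha'0^p\alpha''$ cause a multiplicative blow-up; varying the internal paddings freely to ``distinguish copies,'' as you propose, destroys exactly this sparsity. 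Second, in the slow-growth regime the balanced count $f(n_k)/n_k$ can \emph{decrease}, and your scheme (always $m_k\geq 1$ copies, distinctness via padding) can only increase the pattern count; the paper's Case III pairs distinct blocks into longer ones, halving $s_k$ as lengths double. Finally, recurrence is not ``by design'': your lower bound needs the $m_k$ copies to be mutually distinct (different paddings), so they are not literal recurrences of one another, and a factor spanning a particular junction may occur only once. The paper needs a dedicated argument (Lemma \ref{lem:recurrent}) with a cyclic ordering guaranteeing each concatenation $\alpha_i \star \alpha_{i+1}$ reappears at all later scales; you would need an analogous device to reconcile distinctness with recurrence.
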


Notice that non-trivial outcomes of Theorem \ref{thm:main} is that the complexity function of every subshift is asymptotically equivalent to the complexity function of a single infinite word, and furthermore of a recurrent word -- that is, an infinite word in which every finite factor appears infinitely many times -- which is of high importance in symbolic dynamics.

    The proof of this theorem depends on an explicit algorithmic construction, which is philosophically based on constructing certain `Cantor sets of integers', whose `gaps' correspond to blocks of zeros; the structure of these `Cantor sets' may oscillate, some times resembling the usual middle-third Cantor set, and some times resembling a positive measure Cantor set, depending on the desired growth of the complexity function. Constructions of infinite words based on `positive measure Cantor sets of integers' were used by Mauduit and by the second named author in \cite{MM1, MM2} to study sets of infinite words with complexity functions bounded by a given function.

A wider framework for complexity functions is given by formal languages. The growth function of a formal language is counts words in it of length at most $n$, for each $n\in \mathbb{N}$. The set of finite factors of any subshift is a formal language which is hereditary (that is, closed under taking subwords). However, there exist hereditary languages whose growth functions are not asymptotically equivalent to those of any subshift. Growth functions of hereditary languages coincide with the growth functions of associative algebras and semigroups. Interesting realization theorems of different functions as growth functions of hereditary languages (equivalently, algebras or semigroups) were found in \cite{BBL, SB, Tro}; a complete characterization of growth functions of hereditary languages, to which Theorem \ref{thm:main} is analogous, was finally done by Bell and by the third named author in \cite{BZ}. Interesting examples of hereditary languages with highly oscillating (in the strongest possible sense) growth functions were constructed by Belov, Borisenko and Latyshev in \cite{BBL}; we refer the reader to their book for more on the connection between infinite words and associative algebras.

A subshift is minimal if it admits no proper subshifts. Every infinite word in any minimal subshift has a dense shift-orbit inside it, and in combinatorial terms, is uniformly recurrent -- that is, every finite factor of it appears within every other sufficiently large factor of it; this is a stronger property than recurrence. 
We prove an approximate realization result for complexity function of minimal subshifts:

\begin{prop} \label{prop:minimal}
Let $f\colon \mathbb{N}\rightarrow \mathbb{N}$ be a non-decreasing, unbounded, submultiplicative function. Then  there exists a minimal subshift $\mathcal{X}$ such that $f(n) \preceq p_{\mathcal{X}}(n) \preceq n f(n)$.
\end{prop}

Finally, we derive some applications to the growth of algebras.

\section*{Acknowledgement}
We are grateful to the referee for a careful and thorough reading of our manuscript, and for many comments, corrections and suggestions which significantly improved the paper.

\section{Complexity functions of subshifts, words and algebras}

Let $\Sigma$ be a finite alphabet. A hereditary language over $\Sigma$ is a non-empty collection of finite words $\mathcal{L}\subseteq \Sigma^*=\bigcup_{n=0}^{\infty} \Sigma^n$, closed under taking subwords. A hereditary language $\mathcal{L}$ is (right) extendable if for every $\alpha\in \mathcal{L}$ there exists some non-empty word $\alpha'$ such that $\alpha \alpha' \in \mathcal{L}$. 
A subshift over $\Sigma$ is a non-empty, closed subset of $\mathcal{X}\subseteq \Sigma^{\mathbb{N}}$ with respect to the product topology, which is invariant under the shift operator:
\begin{eqnarray*} T \colon \Sigma^{\mathbb{N}} & \longrightarrow & \Sigma^{\mathbb{N}} \\  w_0w_1w_2\cdots 
 & \mapsto & w_1w_2w_3\cdots \end{eqnarray*}
The set of finite factors of every subshift $\mathcal{X}$, denoted $\mathcal{L}(\mathcal{X})$ is an extendable hereditary language; conversely, every extendable hereditary language is the language of finite factors of a uniquely determined subshift.

The notion of complexity readily extends to formal languages which are not necessarily extendable. If $L\subseteq \Sigma^*$ is a (hereditary) language, let $p_{L}(n) = \#\left(L \cap \Sigma^n\right)$. Clearly, any function $f\colon \mathbb{N}\rightarrow \mathbb{N}$ occurs as the complexity functions of some formal language; the case of hereditary languages is much more restricted and interesting. Since complexity functions of non-extendable languages are not necessarily non-decreasing, it is useful to introduce the notions of growth functions of hereditary languages as the global counting function of the complexity function: $P_L(n) = \sum_{i=0}^{n} p_L(i) = \#\left(L\cap \Sigma^{\leq n}\right)$. Similarly, one can define the growth function of a subshift by $P_\mathcal{X} = P_{\mathcal{L}(\mathcal{X})}$. The class of growth functions of hereditary languages coincides with the class of growth functions of associative algebras and of semigroups (for more on this, see \cite{Ber,KL}). The discrete derivative of a function $f\colon \mathbb{N}\rightarrow \mathbb{N}$ is given by $f'(n)=f(n)-f(n-1)$ for $n>1$, and $f'(0)=f(0)$. Thus, $p_L=P'_L$.

A hereditary language $L$ is recurrent (or irreducible) if for every $\alpha_1,\alpha_2\in L$ there exists some $\beta\in L$ such that $\alpha_1 \beta \alpha_2\in L$. Recurrent hereditary languages are extendable and correspond to recurrent subshifts and to recurrent infinite words. In fact, our strategy to construct the prescribed infinite words in Theorem \ref{thm:main} is to construct a hereditary language of desired complexity and then show that it is recurrent.  Algebraically, recurrent hereditary languages define precisely those monomial algebras which are prime.

For the reader's convenience, we repeat the basic necessary properties satisfied by any complexity function of a subshift.

\begin{prop}
Let $\mathcal{X}$ be a subshift over a finite alphabet $\Sigma$. Then: (i) $p_\mathcal{X}$ is non-decreasing; (ii) $p_\mathcal{X}(n+m)\leq p_\mathcal{X}(n)p_\mathcal{X}(m)$ for every $n,m\in \mathbb{N}$, and (iii) either $p_\mathcal{X}$ is bounded or $p_\mathcal{X}(n)\geq n+1$ for every $n$.
\end{prop}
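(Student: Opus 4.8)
The plan is to treat the three properties separately, exploiting the single structural fact that the set of finite factors $\mathcal{L}(\mathcal{X})$ is a hereditary language in which every factor extends to the right: since the elements of $\mathcal{X}$ are genuine one-sided infinite words, any occurrence $w_i\cdots w_{i+n-1}$ of a length-$n$ factor is a prefix of the length-$(n+1)$ factor $w_i\cdots w_{i+n}$.

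For (i) I would observe that truncating a factor of length $n+1$ to its length-$n$ prefix defines a map from the length-$(n+1)$ factors onto the length-$n$ factors; the extendability noted above gives surjectivity, and a surjection from a finite set onto another forces $p_{\mathcal{X}}(n)\le p_{\mathcal{X}}(n+1)$. For (ii) I would send each factor $u$ of length $n+m$ to the pair formed by its length-$n$ prefix and its length-$m$ suffix. By heredity both of these are again factors, of lengths $n$ and $m$ respectively, and since $u$ is recovered from the pair by concatenation the map is injective; hence the number of length-$(n+m)$ factors is at most $p_{\mathcal{X}}(n)\,p_{\mathcal{X}}(m)$. Both of these are one-line set-theoretic arguments with no real content.

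Part (iii) is the Morse--Hedlund dichotomy \cite{MH38} and carries all the weight. The main obstacle is to show that a single coincidence $p_{\mathcal{X}}(n+1)=p_{\mathcal{X}}(n)$ already forces $p_{\mathcal{X}}$ to be eventually constant, hence bounded. The key step is that such an equality turns the prefix map of (i) into a bijection, so each length-$n$ factor $u$ is followed by a \emph{unique} letter $\delta(u)$ with $u\delta(u)\in\mathcal{L}(\mathcal{X})$. I would then promote this determinism: given any length-$(n+1)$ factor $v$ with length-$n$ suffix $u$, every right extension $vb$ contains $ub$ as a subword, forcing $b=\delta(u)$, so $v$ too has a unique right extension and $p_{\mathcal{X}}(n+2)=p_{\mathcal{X}}(n+1)$. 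The delicate point is the consistency of the resulting induction — one must check that the unique-successor rule genuinely transfers to every longer length via this suffix argument — after which $p_{\mathcal{X}}$ is constant from $n$ onward.

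Finally I would take the contrapositive: if $p_{\mathcal{X}}$ is unbounded then no such coincidence occurs, so $p_{\mathcal{X}}(n+1)\ge p_{\mathcal{X}}(n)+1$ for every $n$ by monotonicity; since $p_{\mathcal{X}}(0)=1$ (the empty word being the unique length-$0$ factor), this telescopes to $p_{\mathcal{X}}(n)\ge n+1$ for all $n$, completing the dichotomy.
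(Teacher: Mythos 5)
Your proof is correct. Parts (i) and (ii) coincide with the paper's argument up to a trivial reflection: for (i) the paper fixes, for each length-$n$ factor $w$, a one-letter right extension $w'$ and notes that $w\mapsto w'$ is injective, whereas you run the truncation map in the opposite direction and use surjectivity; these are the same observation, both resting on the right-extendability of factors of one-sided infinite words. For (ii) the prefix--suffix injection is identical to the paper's. The genuine difference is in (iii): the paper simply cites the Morse--Hedlund theorem with no proof, while you reprove it from scratch via the standard determinism argument. Your argument is sound, and the ``delicate point'' you flag is in fact unproblematic: if $p_{\mathcal{X}}(n+1)=p_{\mathcal{X}}(n)$, the surjective truncation map becomes a bijection, so each length-$n$ factor $u$ has a unique successor letter $\delta(u)$; for a length-$(n+1)$ factor $v$ with length-$n$ suffix $u$, any extension $vb$ has $ub$ as a suffix, hence $b=\delta(u)$ by heredity, so $v$ also has a unique successor and $p_{\mathcal{X}}(n+2)=p_{\mathcal{X}}(n+1)$; iterating with $n+1$ in place of $n$ gives constancy from $n$ onward, hence boundedness (note you only need boundedness of $p_{\mathcal{X}}$, not the eventual periodicity that Morse--Hedlund also delivers). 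The contrapositive with $p_{\mathcal{X}}(0)=1$ then telescopes to $p_{\mathcal{X}}(n)\geq n+1$ exactly as you say. What your route buys is self-containedness; what the paper's citation buys is brevity, appropriate since the dichotomy is classical and plays no structural role in the paper's constructions.
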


\begin{proof}
Let $\mathcal{L}$ be the hereditary language of finite factors of $\mathcal{X}$. For each $w\in \mathcal{L}\cap \Sigma^n$, fix some (any) word $w'\in \mathcal{L}\cap \Sigma^{n+1}$ whose length-$n$ prefix is $w$; this is possible since $w$ factors an infinite word from $\mathcal{X}$. The function $w\mapsto w'$ is an injective function $\mathcal{L}\cap \Sigma^n\rightarrow \mathcal{L}\cap \Sigma^{n+1}$, proving (i). Furthermore, every length-$n+m$ factor of $\mathcal{X}$ is uniquely determined by its length-$n$ prefix and its length-$m$ suffix, both of which are still factors of $\mathcal{X}$, proving (ii). Finally, (iii) is the Morse-Hedlund theorem \cite{MH}.
\end{proof}

We freely use the following standard asymptotic properties of functions throughout.

\begin{lem} \label{lem:enough n_k}
Let $f,g\colon \mathbb{N}\rightarrow \mathbb{N}$ be non-decreasing functions and let $(n_k)_{k=1}^{\infty}$ be a sequence of positive integers satisfying $n_k < n_{k+1} \leq C n_k$ for some constant $C > 1$. If $f(n_k)\leq Dg(Dn_k)$ for some constant $D>0$ then $f\preceq g$.
\end{lem}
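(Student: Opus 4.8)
The plan is to upgrade the pointwise bound $f(n_k)\leq D g(D n_k)$, which holds only along the sparse sequence $(n_k)$, into a bound of the shape $f(n)\leq C' g(D' n)$ valid for all $n$. The mechanism is the multiplicative density encoded in the hypothesis $n_k < n_{k+1}\leq C n_k$: consecutive terms differ by at most a bounded factor, so every integer is sandwiched between two neighbouring terms of $(n_k)$ that are comparable up to the constant $C$. Monotonicity of both $f$ and $g$ then lets me transfer the estimate from the nearest sequence point to $n$ itself, at the cost of inflating the constants.

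Concretely, I would fix an arbitrary $n\geq n_1$ and let $k$ be the largest index with $n_k\leq n$. Since the $n_k$ are strictly increasing positive integers, they tend to infinity, so such a finite $k$ exists and $n< n_{k+1}$. Monotonicity of $f$ gives $f(n)\leq f(n_{k+1})$, and the hypothesis yields $f(n_{k+1})\leq D g(D n_{k+1})$. Finally, from $n_{k+1}\leq C n_k\leq C n$ and monotonicity of $g$ we get $g(D n_{k+1})\leq g(DC n)$. Chaining these inequalities produces
$$
f(n)\leq f(n_{k+1})\leq D g(D n_{k+1})\leq D g(DC n),
$$
which is exactly the desired estimate with $C'=D$ and $D'=DC$, hence $f\preceq g$.

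It then remains only to dispose of the finitely many values $n<n_1$. For these, monotonicity bounds $f(n)\leq f(n_1)$ by a single constant, so one can absorb them by enlarging $C'$, using that $g(D'n)$ is positive on this finite initial segment; in the degenerate situation where $g$ vanishes there (or everywhere) the corresponding values of $f$ are themselves forced to vanish by the same monotonicity argument run along the $n_k$, and the inequality holds trivially.

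I do not anticipate a genuine obstacle: the lemma is a soft bookkeeping statement whose only real content is the sandwiching step of the second paragraph. The single point requiring a moment's care is the treatment of the initial segment $n<n_1$ together with the degenerate case where $g$ is eventually (or identically) zero, which is why I isolate it at the end rather than fold it into the main estimate.
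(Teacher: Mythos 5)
Your proposal is correct and follows essentially the same argument as the paper's own proof: sandwich $n$ between $n_k$ and $n_{k+1}$, apply the hypothesis at $n_{k+1}$, and use monotonicity of $g$ together with $n_{k+1}\leq Cn_k\leq Cn$ to conclude $f(n)\leq Dg(CDn)$. Your additional treatment of the initial segment $n<n_1$ is a minor completeness point that the paper leaves implicit.
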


\begin{proof}
Let $n \in \mathbb{N}$ be arbitrary. Fix $k$ such that $n_k \leq n \leq n_{k+1}$. Then $f(n)\leq f(n_{k+1}) \leq Dg(Dn_{k+1}) \leq Dg(CD n_k) \leq Cg(CD n)$.
\end{proof}

\begin{lem}
Let $f,g\colon \mathbb{N}\rightarrow \mathbb{N}$ be non-decreasing functions with non-decreasing discrete derivatives. If $f'(n)\leq Cg'(Cn)$ for some $C>0$ then $f(n)\leq g(Dn)$ for some $D>0$.
\end{lem}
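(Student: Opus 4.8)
The plan is to reconstruct $f$ and $g$ from their discrete derivatives by summation, and then compare the two sums termwise. The starting point is the telescoping identity: from $f'(0)=f(0)$ and $f'(i)=f(i)-f(i-1)$ for $i\geq 1$ one gets
$$
f(n)=\sum_{i=0}^{n} f'(i),
$$
and likewise $g(n)=\sum_{i=0}^{n} g'(i)$. This converts the hypothesis, which is about derivatives, into something I can ``integrate.'' This is really the discrete analogue of integrating the pointwise inequality $f'\leq C\,g'(C\,\cdot)$ to bound $f$ by $g$.

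First I would reduce to the case where $C$ is a positive integer: since $g'$ is non-decreasing and $C\leq \lceil C\rceil$, the hypothesis $f'(n)\leq C\,g'(Cn)$ yields $f'(n)\leq \lceil C\rceil\,g'(\lceil C\rceil n)$, so I may replace $C$ by $\lceil C\rceil$ (this also sidesteps the interpretation of $g'$ at non-integer arguments). Substituting the termwise bound into the telescoping identity gives
$$
f(n)=\sum_{i=0}^{n} f'(i)\leq C\sum_{i=0}^{n} g'(Ci).
$$
The crux is to recognize the sampled sum $\sum_{i=0}^{n} g'(Ci)$ as a fraction of a genuine partial sum of $g'$, and this is exactly where the monotonicity of $g'$ is used. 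Partition $\{0,1,\dots,Cn+C-1\}$ into the $n+1$ consecutive blocks $\{Ci,\dots,Ci+C-1\}$; on each block the minimum of $g'$ is attained at the left endpoint $Ci$, so $C\cdot g'(Ci)\leq \sum_{j=Ci}^{Ci+C-1} g'(j)$. Summing over $i$, and using that the blocks tile the index range exactly, yields
$$
C\sum_{i=0}^{n} g'(Ci)\leq \sum_{j=0}^{Cn+C-1} g'(j)=g(Cn+C-1).
$$
Combining the two displays gives $f(n)\leq g(Cn+C-1)$, and since $g$ is non-decreasing and $Cn+C-1\leq 2Cn$ for $n\geq 1$, I conclude $f(n)\leq g(2Cn)$, so $D=2C$ works.

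I do not expect a serious obstacle here; the difficulty is bookkeeping rather than conceptual. The two points requiring care are that the reduction to integer $C$ and the tiling of the index range line up exactly, and the small-$n$ boundary: at $n=0$ the argument only gives $f(0)\leq C\,g(0)$, not $f(0)\leq g(0)$, so the clean inequality $f(n)\leq g(Dn)$ is understood for $n\geq 1$ (finitely many small values being immaterial for the intended $\preceq$-type applications). The one essential hypothesis is that $g'$ is non-decreasing: without it the sampled values $g'(Ci)$ could not be controlled by the intervening terms $g'(j)$, and the passage from the sampled sum to the honest partial sum $g(Cn+C-1)$ — and hence the conclusion — would fail.
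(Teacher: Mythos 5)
Your proof is correct and is essentially the paper's argument: both reconstruct $f$ and $g$ via the telescoping identity $f(n)=\sum_{i=0}^n f'(i)$ and use the monotonicity of $g'$ to absorb the sampled sum $C\sum_{i=0}^n g'(Ci)$ into an honest partial sum of $g'$, i.e.\ a value of $g$. The paper compresses this into one line, $f(n)\leq C\sum_{i=0}^n g'(Ci)\leq \sum_{i=0}^{2C^2n} g'(i)=g(2C^2n)$; your version merely adds careful bookkeeping (integer reduction for $C$, exact block tiling, the $n=0$ boundary) and yields the slightly better constant $D=2C$ in place of $2C^2$.
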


\begin{proof}
$f(n) = \sum_{i=0}^n f'(i) \leq C\sum_{i=0}^n g'(Ci) \leq \sum_{i=0}^{2C^2 n} g'(i) = g(2C^2n)$.
\end{proof}

Theorem \ref{thm:main} can be reformulated to characterize the growth functions of subshifts and infinite words:

\begin{thm}[{A reformulation of Theorem \ref{thm:main}}]
Let $F\colon \mathbb{N}\rightarrow \mathbb{N}$ be a non-decreasing function, such that $F'$ is non-decreasing, $F'(n)\geq n+1$ and $F'(2n)\leq F'(n)^2$ for every $n\in \mathbb{N}$. Then there exists a recurrent infinite word $w$ such that $F\sim P_w$.
\end{thm}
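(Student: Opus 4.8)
The plan is to deduce the reformulation directly from Theorem~\ref{thm:main} by passing between a growth function and its discrete derivative, using the identity $p_w=P'_w$ together with the derivative lemma proved above (the one asserting that $f'(n)\leq Cg'(Cn)$ forces $f(n)\leq g(Dn)$ whenever $f',g'$ are non-decreasing). The whole point of calling this statement a ``reformulation'' is that, modulo this derivative bookkeeping, it carries no new content beyond Theorem~\ref{thm:main}; the combinatorial heart of the matter is already contained in that theorem.

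First I would set $f:=F'$ and observe that the three hypotheses imposed on $F$ are exactly the hypotheses of Theorem~\ref{thm:main} for $f$: by assumption $F'$ is non-decreasing, $F'(n)\geq n+1$, and $F'(2n)\leq F'(n)^2$. Applying Theorem~\ref{thm:main} therefore produces a recurrent infinite word $w$ with $f\sim p_w$; writing $\mathcal{X}$ for the orbit closure of $w$ we have $p_\mathcal{X}=p_w$, so this reads $F'\sim P'_w$.

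It then remains to lift this equivalence of derivatives to the equivalence $F\sim P_w$, and for this I would invoke the derivative lemma in both directions. Its applicability requires both derivatives to be non-decreasing: $F'$ is non-decreasing by hypothesis, while $P'_w=p_w$ is non-decreasing because the complexity function of any subshift is non-decreasing (property (i) of the Proposition recalled above). Hence $F'\preceq P'_w$ yields $F\preceq P_w$, and $P'_w\preceq F'$ yields $P_w\preceq F$, so that $F\sim P_w$; recurrence of $w$ is inherited directly from Theorem~\ref{thm:main}, giving the final clause.

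The only genuinely delicate point—and the closest thing to an obstacle—is matching the single-constant hypothesis $f'(n)\leq Cg'(Cn)$ of the derivative lemma against the two-constant definition $F'(n)\leq CP'_w(Dn)$ of $\preceq$. This is precisely where monotonicity of the derivatives is used: replacing both $C$ and $D$ by $\max(C,D)$ and enlarging the argument only increases the non-decreasing function $P'_w$, so the single-constant form is recovered in each direction. Once this is checked the reformulation follows formally, with no need to revisit the explicit construction underlying Theorem~\ref{thm:main}.
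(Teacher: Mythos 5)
Your proof is correct and is essentially the paper's own argument: the paper presents this statement as an immediate consequence of Theorem \ref{thm:main} applied to $f=F'$ combined with the preceding lemma on discrete derivatives, which is exactly your route, including the observation that monotonicity of $P'_w=p_w$ (property (i) of complexity functions) lets you collapse the two constants in the definition of $\preceq$ to the single-constant hypothesis of that lemma. Your bookkeeping even recovers the paper's remark that the outer constant can be dropped in this formulation, so nothing further is needed.
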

By the above lemma, in this reformulation the asymptotic equivalence notion does not require the outer constant.
This version of Theorem \ref{thm:main} is analogous to the main result in \cite{BZ}, which characterizes growth functions of hereditary languages. The assumptions in Theorem \ref{thm:main} are stronger than those in \cite{BZ}, which is necessary to realize given functions as complexity or growth functions of infinite words and not just hereditary languages. Indeed, there exist hereditary languages whose growth functions are not asymptotically equivalent to the growth function of any extendable language (e.g. \cite[Theorem 1.4]{BGSel}). Alternatively, this follows from the following.

\begin{exmpl}
Let $F\colon \mathbb{N}\rightarrow \mathbb{N}$ be given by $F(0)=1$ and, for every integer $n\ge 1$,
$$ F'(n) = F(n) - F(n-1) =  \left\{
\begin{array}{ll} (2k+2)! &\ \text{if}\ (2k-1)!\le n<(2k)! \\
      (2k+1)!^2 &\ \text{if}\ (2k)!\le n<(2k+1)! \\
\end{array} 
\right. $$
We have $F'(n)\geq n+1$ and $F'(m)\leq F'(n)^2$ for every $n\ge 1$ and $m\in\{n, n+1,\dots,2n\}$, and therefore by \cite[Corollary 4.5]{BZ}, $F$ is asymptotically equivalent to the growth function of some hereditary language. 

However, $F'$ is not a non-decreasing function, and indeed $F$ is not asymptotically equivalent to the growth function of any extendable language. In order to see this, notice that if there is a constant $C>1$ such that $F(n/C)<G(n)<F(Cn)$ for some growth function $G$ of an extendable language, we should have $G'$ non-decreasing, but for $k$ large, we would have the following estimates:
\begin{eqnarray*}
  G\left(\frac1{C}(2k+2)!\right)-G(C(2k+1)!) & \le &  F((2k+2)!)-F((2k+1)!) \\ & = & (2k+1)(2k+1)!(2k+4)!\,  
\end{eqnarray*}
and:
\begin{eqnarray*}
G(C(2k+1)!)-G\left(\frac1{C}(2k)!\right) & \ge & F((2k+1)!)-F((2k)!) \\ & = & (2k)(2k)!(2k+1)!^2,    
\end{eqnarray*}
from which we conclude that the average of the discrete derivative $G'$ of $G$ in the interval $[C(2k+1)!,\frac1{C}(2k+2)!)]$ is $\le (1+o(1))C(2k+4)!$, while the the average of $G'$ in the interval $[\frac1{C}(2k)!,C(2k+1)!)]$ is $\ge \frac{1+o(1)}{C}(2k+1)!^2$, which is much greater than $(1+o(1))C(2k+4)!$, a contradiction, since $G'$ is non-decreasing.
\end{exmpl}

\section{Construction} \label{sec:construction}

Let $f\colon \mathbb{N}\rightarrow \mathbb{N}$ be as in the assumption of Theorem \ref{thm:main}. Since $f(n)\geq n+1$ then $f(Cn)\geq Cn+1$ and so, up to equivalence, we may assume that $f(n)\geq 8n$, replacing $f(n)$ by $g(n):=f(8n)$. Notice that $g(2n)=f(16n)\leq f(8n)^2=g(n)^2$, so the submultiplicativity condition remains and there is no loss of generality in assuming $f(n)\geq 8n$. (In fact, we may even assume that $f$ is increasing, replacing it by $g(n)=f(n)+n$.) We may further assume that $f(0)=1$.
    Let $b=f(1)$, which by the above, can be assumed to be $\geq 8$. Let $\Sigma=\{0,1,\dots,\lfloor \frac{b}{2} \rfloor\}$ be our alphabet. Define two sequences of positive integers, $(n_k)_{k=1}^{\infty}$ and $(s_k)_{k=1}^{\infty}$. For each $k\geq 1$ we will define $X_k\subseteq \Sigma^{n_k} \setminus \left( 0\Sigma^* \cup \Sigma^*0 \right)$, namely, a set of length-$n_k$ words over the alphabet $\Sigma$ not starting or ending with $0$, of cardinality $|X_k|=s_k$. We think of each $X_k$ as an ordered set. Furthermore, for every $k\geq 1$ we will have $2n_k < n_{k+1} \leq 8n_k$. Eventually, we will take $L$ to be the set of all finite factors of $\bigcup_{k=1}^{\infty} X_{k}$, namely, its hereditary closure. Then, it will be shown that $L$ coincides with the set of finite factors of a certain infinite recurrent word, whose shift orbit closure is the desired subshift $\mathcal{X}$.
To start, we let $n_1=1$, $s_1=\lfloor \frac{b}{2} \rfloor$ and $X_1=\{1,2,\dots,\lfloor \frac{b}{2} \rfloor\}$. We proceed by induction, assuming $n_1,\dots,n_k,s_1,\dots,s_k,X_1,\dots,X_k$ have been defined.

\begin{figure}
\begin{tikzcd}
\text{Case I: One step} \arrow[loop, distance=2em, in=125, out=55] \arrow[ddd, shift right=2] \arrow[dddddd, bend right, shift right=5]                  &                                                                                                                           &                                                                                                                            \\
                                                                                                                                                         &                                                                                                                           &                                                                                                                            \\
                                                                                                                                                         &                                                                                                                           & \text{Sub-Case (1)} \arrow["\substack{\text{({Possibly}} \\ \ \infty\ \text{steps})}"', loop, distance=2em, in=125, out=55] \arrow[dd] \\
\text{Case II} \arrow[ddd, shift right=2] \arrow[uuu, shift right] \arrow[r, shift right]                                                                & \text{Preparation step} \arrow[ru] \arrow[rd] \arrow[luuu, shift right=2] \arrow[l, shift right] \arrow[lddd, shift left] &                                                                                                                            \\
                                                                                                                                                         &                                                                                                                           & \substack{\text{Sub-Case (2)} \\ \text{one step or route to III}} \arrow[lldd] \arrow[lluuuu, bend right=60]               \\
                                                                                                                                                         &                                                                                                                           &                                                                                                                            \\
\text{Case III} \arrow["(<\infty\ \text{steps})"', loop, distance=2em, in=305, out=235] \arrow[uuu, shift right] \arrow[uuuuuu, bend left, shift left=2] &                                                                                                                           &                                                                                                                           
\end{tikzcd}

\caption{A flow diagram of the different cases and routines used in the proof of Theorem \ref{thm:main}.}
\end{figure}
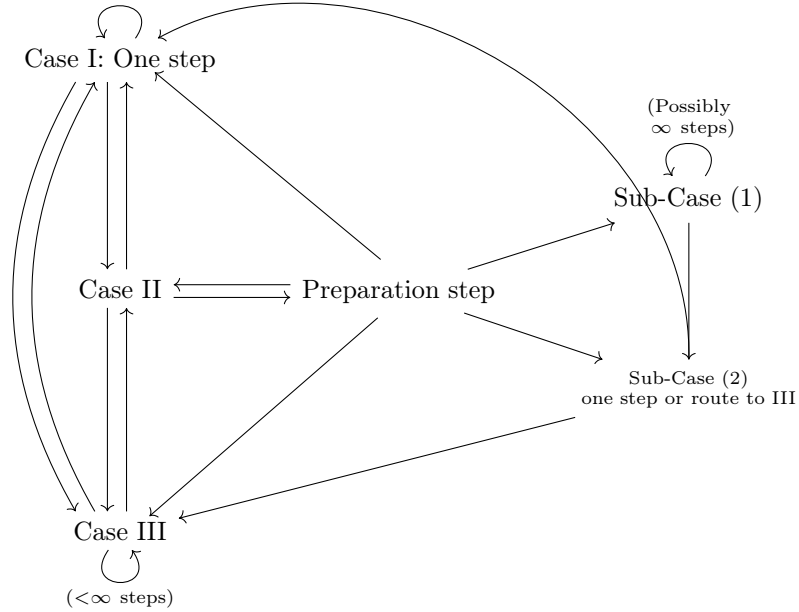

\begin{defn}
We say that $k$ is \textit{balanced} if: $$\frac{1}{3}f\left(\lfloor \frac{n_k}{3} \rfloor\right) \leq 2n_k s_k \leq f(n_k).$$
\end{defn}

By the end of each one of the cases, we end up with a balanced index. We thus assume that we are in  a balanced index in the beginning of each case. Notice that by the definition of $n_1,s_1$, it holds that $k=1$ is balanced.

\bigskip

\noindent \textit{Case I: Tame growth of $f$.} If:
\begin{align}
\sqrt{\frac{f(3n_k)}{6n_k}} \leq s_k \leq \frac{f(3n_k)}{6n_k}
\end{align}
then we take $n_{k+1}=3n_k$ and $s_{k+1}=\lfloor \frac{f(3n_k)}{6n_k} \rfloor = \lfloor \frac{f(n_{k+1})}{2n_{k+1}} \rfloor$ (notice that $k+1$ is balanced) so $s_{k+1}\geq s_k$ and pick, for each $\alpha\in X_k$, an arbitrary subset $Y(\alpha)\subseteq X_k$ of cardinality $|Y(\alpha)|=:r_\alpha \in \{\lfloor \frac{s_{k+1}}{s_k} \rfloor,\lceil \frac{s_{k+1}}{s_k} \rceil\}$, chosen such that $\sum_{\alpha\in X_k} r_\alpha = s_{k+1}$. This is indeed possible, since $\lfloor \frac{s_{k+1}}{s_k} \rfloor s_k \leq s_{k+1} \leq \lceil \frac{s_{k+1}}{s_k} \rceil s_k$, and: 
$$
r_\alpha \leq \lceil \frac{s_{k+1}}{s_k} \rceil \leq \lceil \frac{f(3n_k)}{6n_ks_k} \rceil \leq \lceil \sqrt{\frac{f(3n_k)}{6n_k}} \rceil \leq s_k = |X_k|.
$$
Now let:
$$ 
X_{k+1} = \{ \underbrace{\alpha}_{n_k} \underbrace{0^{n_k}}_{n_k} \underbrace{\beta}_{n_k}\ |\ \alpha\in X_k,\ \beta\in Y(\alpha) \} \subseteq \Sigma^{3n_k} = \Sigma^{n_{k+1}}
$$
and notice that $|X_{k+1}| = \sum_{\alpha\in X_k} |Y(\alpha)| =  \sum_{\alpha\in X_k} r_\alpha = s_{k+1}$.

We further take $\alpha_{i+1} \in Y(\alpha_i)$ for each $1\leq i\leq s_k$, where $\alpha_{s_k+1}$ is taken to be $\alpha_1$ (i.e. we work modulo $s_k$).

\bigskip

For Cases II,III, whenever we reach a balanced index, we terminate the routine.

\noindent \textit{Case II: Rapid growth of $f$.} If:
\begin{align}
s_k < \sqrt{\frac{f(3n_k)}{6n_k}}
\end{align}
then, equivalently, $f(3n_k) > 6n_ks_k^2$. 

\smallskip

\noindent \textit{Preparation step.}
We take $n_{k+1}=8n_k$ and: 
$$ s_{k+1} = \min \Big\{ \lfloor \frac{f(8n_k)}{16n_k} \rfloor, \max \{ s_k , f\left(\lfloor \frac{n_k}{3}\rfloor \right) \} \Big\}. $$
We now explain how to construct $X_{k+1}$. 
Notice that, since we assume that $k$ is balanced (as an output of one of the three cases) then: $$ s_{k+1} \leq \max\{s_k , f(\lfloor \frac{n_k}{3} \rfloor)\} \leq 6 n_k s_k. $$ 
Furthermore, notice that by the case assumption and by the assumptions on $f$, we have: $$ s_k < \sqrt{\frac{f(3n_k)}{6n_k}} \leq \sqrt{\frac{f(8n_k)}{6n_k}\cdot \frac{f(8n_k)}{48n_k}} \leq \frac{f(8n_k)}{16n_k} $$
so $s_k \leq s_{k+1}$.

We can thus take $X_{k+1}$ to be an arbitrary size-$s_{k+1}$ subset of $\Sigma^{n_{k+1}}=\Sigma^{8n_k}$ consisting of words of the form:
$$ \underbrace{\alpha}_{n_k}\underbrace{0^{n_k}}_{n_k} \underbrace{0^j\beta 0^{3n_k-j}}_{4n_k} \underbrace{0^{n_k}}_{n_k} \underbrace{\gamma}_{n_k} $$
where, if $\alpha=\alpha_p$ then $\gamma=\alpha_{p+1}$, taken cyclically over $\alpha_1,\dots,\alpha_{s_k}$.
Notice that, letting $j\in [0,3n_k]$, and $\alpha,\beta\in X_k$, we can arrange up to $3n_ks_k^2 \geq 6n_k s_k$ elements in $X_{k+1}$ with prefix $\alpha$ for each $\alpha\in X_k$, and altogether we can indeed take $X_{k+1}$ to be of size $s_{k+1}$ as required. The resulting words are all distinct as $\beta$ does not start or end with $0$.

In the case that $s_{k+1} = \lfloor f(8n_k)/16n_k  \rfloor = \lfloor \frac{f(n_{k+1})}{2n_{k+1}} \rfloor$, it follows that:
$$ f(n_{k+1}) \geq 2n_{k+1} \lfloor \frac{f(n_{k+1})}{2n_{k+1}} \rfloor = 2n_{k+1} s_{k+1} \geq f(n_{k+1}) - 2n_{k+1} \geq \frac{2}{3}f(n_{k+1}) \geq \frac{1}{3}f(\lfloor \frac{n_{k+1}}{3} \rfloor) $$ 
so $k+1$ is balanced and the routine stops. We thus concentrate, for the rest of Case II, on the case that $s_{k+1} = \max \{ s_k , f(\lfloor n_k/3 \rfloor ) \} \leq \lfloor f(8n_k)/16n_k \rfloor$.

For each $i\geq 1$ we define\footnote{A comment on the construction: the reason we divide by $(k+i)^2$ is that the series of reciprocals of square integers is convergent, and the squares grow slower than an exponential function. This will become more clear in the proof of Lemma \ref{lem:complexity bounds}.}:

\begin{align} \label{g and n}
g_{k+i} := & n_{k+i} - 2n_{k+i-1}  \\
\tilde{g}_{k+i+1} := & \max\{g_{k+i},\lceil \frac{n_{k+i}}{(k+i)^2}  \rceil\}, \nonumber \\  \tilde{n}_{k+i+1} := & 2n_{k+i}+\tilde{g}_{k+i+1}. \nonumber
\end{align}
(where $g_{k+1}=n_{k+1}-2n_k=8n_k-2n_k=6n_k$; notice it is smaller than $n_{k+1}=8n_k$. In addition, $\tilde{g}_{k+i+1}\leq n_{k+i+1}$ as will follow from the way we pick $n_{k+i+1}$; see below.)

\smallskip

\noindent \textit{Sub-Case (1):}
If $\frac{f(\tilde{n}_{k+i+1})}{2\tilde{n}_{k+i+1}} >  s_{k+i}^2$, we define $s_{k+i+1} := s_{k+i}^2$ and $n_{k+i+1} := \tilde{n}_{k+i+1}$ and notice that $g_{k+i+1} = \tilde{g}_{k+i+1}$. We now define $X_{k+i+1}$ as follows:
$$ X_{k+i+1} = \Big\{ \underbrace{\alpha}_{n_{k+i}} \underbrace{0^{g_{k+i+1}}}_{g_{k+i+1}} \underbrace{\beta}_{n_{k+i}}\ |\ \alpha,\beta\in X_{k+i} \Big\} \subseteq \Sigma^{2n_{k+i}+g_{k+i+1}} = \Sigma^{n_{k+i+1}} $$
of cardinality $s_{k+i}^2 = s_{k+i+1}$. We continue the routine, until (if ever) we get to Sub-Case (2).

\smallskip

\noindent \textit{Sub-Case (2):}
If $\frac{f(\tilde{n}_{k+i+1})}{2\tilde{n}_{k+i+1}} \leq  s_{k+i}^2$, we proceed as follows:

\begin{itemize}     
\item If $\sqrt{\frac{f(3n_{k+i})}{6n_{k+i}}} \leq s_{k+i} \leq \frac{f(3n_{k+i})}{6n_{k+i}}$, we set $n_{k+i+1} = 3n_{k+i}$ and $s_{k+i+1} = \lfloor \frac{f(3n_{k+i})}{6n_{k+i}} \rfloor$, and define $X_{k+i+1}$ as in Case I. Notice that in this case, we end up with a balanced index, and so this routine has ended, and we can continue to any of Cases I,II,III.

\item If $s_{k+i}^2 < \frac{f(3n_{k+i})}{6n_{k+i}}$, take:
$$ M = \min\Big\{ m\in [\tilde{n}_{k+i+1},3n_{k+i}]\ \Big|\ \frac{f(m)}{2m} > s_{k+i}^2\Big\}.$$

Notice that by Sub-Case (2) assumption, $M\neq \tilde{n}_{k+i+1} =  2n_{k+i}+\tilde{g}_{k+i+1}$. 
Notice that always $g_{k+i}\leq n_{k+i}$. Notice that $m=3n_{k+i}$ satisfies the above condition, so $M$ is well-defined.

Now define $n_{k+i+1} := M$ and $s_{k+i+1} = s_{k+i}^2$, so:  $$ \frac{f(n_{k+i+1}-1)}{2(n_{k+i+1}-1)} \leq s_{k+i+1} < \frac{f(n_{k+i+1})}{2 n_{k+i+1}} $$ 
(where the left inequality follows by the definition of $M$ and since $M\neq \tilde{n}_{k+i+1} = 2n_{k+i}+\tilde{g}_{k+i+1}$.)
We take:
$$ X_{k+i+1} = \Big\{ \underbrace{\alpha}_{n_{k+i}} \underbrace{0^{M-2n_{k+i}}}_{M-2n_{k+i}} \underbrace{\beta}_{n_{k+i}}\ |\ \alpha,\beta\in X_{k+i} \Big\} \subseteq \Sigma^M = \Sigma^{n_{k+i+1}} $$
of cardinality $s_{k+i}^2 = s_{k+i+1}$.

Also notice that $k+i+1$ is balanced, since:
\begin{eqnarray*}
f(\lfloor \frac{n_{k+i+1}}{3} \rfloor) \leq  f(n_{k+i+1} - 1) & \leq & 2n_{k+i+1}s_{k+i+1} \\ & = & 2Ms_{k+i}^2 \leq f(M) = f(n_{k+i+1}).    
\end{eqnarray*}
We thus end the routine and the case, and we can move on to any of Cases I,II,III.

\item If $s_{k+i} > \frac{f(3n_{k+i})}{6n_{k+i}}$, we claim that $k+i$ is balanced and that we can move to Case III and define $n_{k+i+1},s_{k+i+1}$ and define $X_{k+i+1}$ accordingly.

To see why $k+i$ is balanced, first notice that by the item's assumption, $$ 2 s_{k+i} n_{k+i} > \frac{1}{3} f(3n_{k+i}) \geq \frac{1}{3}f(\lfloor \frac{n_{k+i}}{3} \rfloor ). $$
Regarding the other direction, assume first that $i=1$. Then by definition $s_{k+1}\leq \frac{f(8n_k)}{16n_k} = \frac{f(n_{k+1})}{2n_{k+1}}$. If $i>1$, then we may assume that $n_{k+i},s_{k+i}$ are outputs of Sub-Case (1), as the above items of Sub-Case (2) end with the next index being balanced. By the way we define the next $n_*,s_*$ in Sub-Case (1), it follows that $s_{k+i} \leq \frac{f(n_{k+i})}{2n_{k+i}}$.
\end{itemize}

\bigskip

\noindent \textit{Case III:  Slow growth of $f$.} If:
\begin{align}
\frac{f(3n_k)}{6n_k} < s_k
\end{align}
we proceed as follows. For each $i\geq 0$ (until a termination condition, to be specified in the sequel, is satisfied) we take: $$ n_{k+i+1} = 2n_{k+i}+n_k = (2^{i+2}-1)n_k,\ \ s_{k+i+1}=\lceil\frac{s_{k+i}}{2}\rceil=\lceil \frac{s_k}{2^{i+1}}\rceil. $$
We now construct $X_{k+i}\subseteq \Sigma^{n_{k+i}}$ for each $i\geq 1$ (until the process is terminated). Write $s_k = 2^i m_k +j_k$ for some $m_k\in \mathbb{N}$ and $0<j_k\leq 2^i$ (so that $ s_{k+i} =  \lceil \frac{s_k}{2^i} \rceil = m_k+1$) and enumerate $X_k=\{\alpha_1,\dots,\alpha_{s_k}\}$. We now let: $$ \beta_{j+1} = \alpha_{2^i j +1}0^{n_k}\alpha_{2^i j +2} 0^{n_k} \cdots 0^{n_k} \alpha_{2^i j + 2^i} $$
for each $0\leq j\leq m_k - 1$, and: $$ \beta_{m_k+1} = \alpha_{2^i m_k + 1} 0^{n_k} \cdots 0^{n_k} \alpha_{s_k} 0^{n_k} \alpha_1 0 ^{n_k} \cdots 0^{n_k} \alpha_{2^i - j_k} $$
(if $j_k=2^i$ then $\beta_{m_k+1}$ ends with $\alpha_{s_k}$.) 
Observe that the length of each $\beta_j$ is $2^in_k+(2^i-1)n_k=(2^{i+1}-1)n_k=n_{k+i}$. We finally take $X_{k+i} = \{\beta_1,\dots,\beta_{m_k+1}\}\subseteq \Sigma^{n_{k+i}}$.
We proceed in this manner until the first \emph{positive} $i$ for which $s_{k+i} \le \frac{f(n_{k+i})}{2n_{k+i}}$. We then exit the sub-routine of Case III with $n_{k+i},s_{k+i}$ defined. Indeed, this is a well-defined termination condition as the right hand side is bounded from below by a constant ($\ge 4$) and the left hand side (namely $s_{k+i}$) decreases exponentially with $i$. 

Let us now explain why $s_{k+i}\geq 2$, and afterwards, why $k+i$ is indeed balanced.
Notice that $s_k>\frac{f(3n_k)}{6n_k}\geq 4$ since $f(x)\geq 8x$, so if $i=1$ then $s_{k+1} = \lceil \frac{s_k}{2} \rceil \geq 2$. If $i>1$, this means that $s_{k+i-1} > \frac{f(n_{k+i-1})}{2n_{k+i-1}}\geq 4$ (again, $f(x)\geq 8x$) and by the same reasoning, $s_{k+i} = \lceil \frac{s_{k+i-1}}{2} \rceil \geq 2$.

Let us now explain why $k+i$ is a balanced index.
First, by the termination condition we have that $s_{k+i} \leq \frac{f(n_{k+i})}{2n_{k+i}}$ so $2n_{k+i}s_{k+i} \leq f(n_{k+i})$.
Now let us prove the other side of the balanced inequality, namely, $\frac{1}{3}f(\lfloor \frac{n_{k+i}}{3} \rfloor) \leq 2n_{k+i}s_{k+i}$. First, assume that $s_{k+i-1}>\frac{f(n_{k+i-1})}{2n_{k+i-1}}$ (indeed, this must be the case if $i>1$ but not necessarily if $i=1$, as the termination condition requires $i>0$). Then:
\begin{eqnarray*}
2s_{k+i}n_{k+i} \geq  s_{k+i-1}n_{k+i} & > &  \frac{n_{k+i}}{2n_{k+i-1}}f(n_{k+i-1}) \\ & > & f(n_{k+i-1}) \geq f\left(\lfloor \frac{n_{k+i}}{3} \rfloor\right)    
\end{eqnarray*}
where the last inequality follows since $n_{k+i}\leq 3n_{k+i-1}$. 

The remaining case is that $i=1$ and $s_k\leq \frac{f(n_k)}{2n_k}$. But then
$$ \frac{f(n_{k+1})}{4n_{k+1}} = \frac{f(3n_k)}{12n_k}  < \frac{s_k}{2} \leq s_{k+1} \leq \frac{f(n_{k+1})}{2n_{k+1}} $$
(the first inequality follows by Case III assumption and the last inequality follows since we assume that $i=1$ satisfies the termination condition) and from this, balancedness follows:
$$ \frac{1}{3}f\left(\lfloor\frac{n_{k+1}}{3} \rfloor \right) \leq \frac{f(n_{k+1})}{2} < 2n_{k+1}s_{k+1} \leq f(n_{k+1}).$$
As mentioned before, balancedness is a termination condition for the routine of Case III as well.

\bigskip

We close this section by illustrating how the routine described above works for certain examples.

\begin{exmpl} \label{exmpl:8n+1}
Let $f(n)=8n+1$. Then $n_1=1,b=9,s_1=4$. We prove by induction that $n_k=3^{k-1},s_k=4$ and we are always in Case I. Indeed, $\frac{f(3n_k)}{6n_k}=\frac{8\cdot 3^k + 1}{2\cdot 3^k}\in (4,5)$ so $\sqrt{\frac{f(3n_k)}{6n_k}} < s_k < \frac{f(3n_k)}{6n_k}$ and we are in Case I, so $n_{k+1}=3n_k=3^k$ and $s_{k+1}=\lfloor \frac{f(3n_k)}{6n_k} \rfloor = 4$.
\end{exmpl}

\begin{exmpl}
Let $f(1)=8$ and $f(n)=8n+10$ for $n\geq 2$. Then $n_1=1,b=8,s_1=4$ so $\frac{f(3n_1)}{6n_1}=\frac{f(3)}{6}=\frac{34}{6}\in (5,6)$ so $s_1=4$ satisfies $\sqrt{\frac{f(3n_1)}{6n_1}} < s_1 < \frac{f(3n_1)}{6n_1}$ and we start in Case I. Next, $n_2=3,s_2=\lfloor \frac{f(3)}{6} \rfloor = \lfloor \frac{34}{6} \rfloor = 5$ so $\frac{f(3n_2)}{6n_2}=\frac{f(9)}{18}=\frac{82}{18} < 5 = s_2$ and we move to Case III. Thus (for $i=0$ in the sub-routine of Case III), $n_3=(2^2-1)n_2=9,s_3=\lceil \frac{s_2}{2} \rceil = 3$. Moreover, $i=0$ is the first index for which $s_{2+i}\leq \frac{f(n_{2+i})}{2n_{2+i}}$ (indeed, $s_2=5$ and $\frac{f(n_2)}{2n_2}=\frac{f(3)}{6}=\frac{34}{6}$), but we are waiting until the first $i>0$ for which $s_{2+i} \leq \frac{f(n_{2+i})}{2n_{2+i}}$. We compute that $s_3=3\leq \frac{f(n_3)}{2n_3}=\frac{f(9)}{18}=\frac{82}{18}$ so we stop Case III procedure with $n_3=9,s_3=3$. Indeed, $k=3$ is a balanced index as $\frac{1}{3}f(\lfloor \frac{n_3}{3} \rfloor)=\frac{1}{3}f(3) = \frac{34}{3} < 2n_3s_3 = 2\cdot 9\cdot 3 = 54 < f(n_3) = f(9) = 82$.

Compute $\frac{f(3n_3)}{6n_3}=\frac{f(27)}{54}=\frac{8\cdot 27+10}{54} =\frac{226}{54} \in (4,5)$ so $\sqrt{\frac{f(3n_3)}{6n_3}} < s_3=3 < \frac{f(3n_3)}{6n_3}$ so we are back in Case I, and $n_4=3n_3=27,s_4=\lfloor \frac{226}{54} \rfloor = 4$. For $k\geq 4$, $n_k=3^{k-1},s_k=4$ as shown in Example \ref{exmpl:8n+1}, and we remain in Case I from this point indefinitely.
\end{exmpl}

\begin{exmpl}
Let $f(n)=8n+100$. Then $n_1=1,b=108,s_1=54$ so $\frac{f(3n_1)}{6n_1}=\frac{f(3)}{6}=\frac{124}{6} \in (20,21)$ so $s_1=54$ satisfies $s_1>\frac{f(3n_1)}{6n_1}$ and we start in Case III. Notice that already for $i=0$ we have $s_{1+i}\leq \frac{f(n_{1+i})}{2n_{1+i}}$ (namely, $54 \leq \frac{108}{2}$), but we are waiting until the first $i>0$ for which $s_{1+i} \leq \frac{f(n_{1+i})}{2n_{1+i}}$. According to the sub-routine of Case III, $n_2=(2^2-1)n_1=3n_1=3$ and $s_2=\lceil \frac{s_1}{2} \rceil = 27$. We see that $s_2=27 > \frac{f(n_2)}{2n_2}=\frac{f(3)}{6}=\frac{124}{6}$, so we continue. Now $n_3=(2^3-1)n_1=7n_1=7$ and $s_3= \lceil \frac{27}{2} \rceil =14$. Again $s_3=14 > \frac{f(7)}{14}=\frac{156}{14}$, so we continue to $n_4=(2^4-1)n_1=15,s_4=\lceil \frac{14}{2} \rceil =7$. Now $s_4=7 < \frac{f(n_4)}{2n_4} = \frac{220}{30}$, so we exit the sub-routine of Case III here. Notice that indeed $4$ is balanced. We have $\frac{f(3n_4)}{6n_4}=\frac{f(45)}{90}=\frac{460}{90} < 7 =s_4$ so we enter Case III again. Compute $n_5=(2^2-1)n_4=3n_4=45,s_5=\lceil \frac{s_4}{2} \rceil = 4$. We check the termination condition $s_5=4 \leq \frac{f(n_5)}{2n_5}=\frac{f(45)}{90}=\frac{460}{90}$ and see it is fulfilled, so we exit Case III again. Now $\frac{f(3n_5)}{6n_5}=\frac{f(135)}{270} = \frac{1180}{270}\in (4,5)$ so $s_5=4$ satisfies $\sqrt{\frac{f(3n_5)}{6n_5}} < s_5 < \frac{f(3n_5)}{6n_5}$ and we are in Case I. Now by induction for all $k\geq 5$ we have $n_k=3^{k-3}\cdot 5,s_k=4$ and we are in Case I indefinitely. Indeed, $\frac{f(3n_k)}{6n_k}=\frac{f(3^{k-2}\cdot 5)}{3^{k-2}\cdot 10}=\frac{3^{k-2}\cdot 40+100}{3^{k-2}\cdot 10} = 4 + \frac{10}{3^{k-2}}\in (4,5)$ so $\sqrt{\frac{f(3n_k)}{6n_k}} < s_k < \frac{f(3n_k)}{6n_k}$ and $n_{k+1}=3n_k = 3^{k-2} \cdot 5,s_{k+1}=\lfloor \frac{f(3n_k)}{6n_k} \rfloor = 4$.
\end{exmpl}

\begin{exmpl}
Let $f(n)=2^{n+3}$. Then $n_1=1,b=16,s_1=8$ and $\frac{f(3n_1)}{6n_1}=\frac{f(3)}{6}=\frac{64}{6}\in (10,11)$ so $s_1=8$ satisfies $\sqrt{\frac{f(3n_1)}{6n_1}} < s_1 < \frac{f(3n_1)}{6n_1}$ and we start in Case I. Then $n_2=3n_1=3,s_2=\lfloor \frac{f(3n_1)}{6n_1} \rfloor = 10$. Now $\frac{f(3n_2)}{6n_2}=\frac{f(9)}{18} = \frac{2^{12}}{18}$ so $s_2=10 < \sqrt{\frac{f(3n_2)}{6n_2}}$ and we enter Case II. The preparation step gives us $n_3=8n_2=24$, and $$s_3=\min\{\underbrace{\lceil \frac{f(8n_2)}{16n_2} \rceil}_{=\lceil \frac{f(24)}{48} \rceil=\lceil \frac{2^{27}}{48} \rceil},\max\{\underbrace{s_2}_{=10},\underbrace{f(1)}_{=16}\}\}=16.$$ Moreover, $$\frac{1}{3}f(\lfloor \frac{n_3}{3} \rfloor) = \frac{1}{3}\cdot 2^{11} < 2n_3s_3=2\cdot 24 \cdot 16=768 < f(n_3)=2^{27}$$ so $3$ is balanced and we exit Case II. But now $\frac{f(3n_3)}{6n_3}=\frac{f(72)}{144}=\frac{2^{75}}{144}$ and $s_3=16 < \sqrt \frac{f(3n_3)}{6n_3}$, so we enter Case II again. The preparation step gives us $n_4=8n_3=192$, and
$$s_4=\min\{\underbrace{\lceil \frac{f(8n_3)}{16n_3} \rceil}_{=\lceil \frac{f(192)}{384} \rceil=\lceil \frac{2^{195}}{384} \rceil},\max\{\underbrace{s_3}_{=16},\underbrace{f(8)}_{=2^{11}}\}\}=2^{11}.$$
This time $\frac{1}{3}f(\lfloor \frac{n_4}{3} \rfloor)=\frac{1}{3}2^{67} > 2n_4s_4=2\cdot 192 \cdot 2^{11}$, so $4$ is not balanced, and we need to move to either Sub-Case (1) or (2). We compute $g_4=n_4-2n_3=192-2\cdot 24=144,\ \tilde{g}_5=\max\{144,\lceil \frac{192}{4^2} \rceil\}=144$ and $\tilde{n}_5=2\cdot 192+144=528$. Now $\frac{f(\tilde{n}_5)}{2\tilde{n}_5}=\frac{2^{531}}{2\cdot 528} > s_4^2=2^{22}$, so we are in Sub-Case (1) and take $n_5 = \tilde{n}_5 = 528$ and $s_5=s_4^2=2^{22}$. 

We claim that for all $i\geq 1$, the indices $n_{4+i},s_{4+i}$ are computed using Sub-Case (1) (we just showed this for $i=1$). Indeed, $g_{4+i}\geq g_{3+i}\geq g_4=144$ so $\tilde{n}_{4+i}=2n_{3+i}+\tilde{g}_{4+i}\geq 2n_{3+i}+144$ and $\tilde{n}_{4+i}\leq 3n_{3+i}$ and by the induction hypothesis $n_{3+i} = \tilde{n}_{3+i}$, so
$$\frac{f(\tilde{n}_{4+i})}{2\tilde{n}_{4+i}} = \frac{2^{\tilde{n}_{4+i}+3}}{2\tilde{n}_{4+i}} \geq \frac{2^{2 \tilde{n}_{3+i}+147}}{6 \tilde{n}_{3+i}} \geq \left( \frac{2^{\tilde{n}_{3+i}}}{2\tilde{n}_{3+i}} \right)^2 > (s_{2+i}^2)^2=s_{3+i}^2$$
(the last inequality follows from the induction hypothesis that $3+i$ falls into Sub-Case (1)) so we are indeed in Sub-Case (1), $n_{4+i}=\tilde{n}_{4+i},s_{4+i}=s_{3+i}^2$ and so on.
Furthermore, the same inductive process shows that none of these indices is balanced. Indeed, the following computations follow by induction: 
\begin{eqnarray*}
n_{4+i} & = & \tilde{n}_{4+i}\geq 2n_{3+i}\geq 2^i n_4 = 2^i\cdot 192 \\ n_{4+i} & \leq &  3n_{3+i}\leq 3^in_4 = 3^i\cdot 192 \\ s_{4+i} & = & s_{3+i}^2=(2^{11\cdot 2^{i-1}})^2=2^{11\cdot 2^i}
\end{eqnarray*}
so 
\begin{eqnarray*} 
\frac{1}{3}f(\lfloor \frac{n_{4+i}}{3} \rfloor) & \geq & \frac{1}{3}f(2^i\cdot 64) = \frac{1}{3}2^{2^i\cdot 64 + 3}  \geq  2^{2^i\cdot 64}  \\ 
2n_{4+i}s_{4+i} & \leq & 2\cdot 3^i \cdot 192 \cdot 2^{11\cdot 2^i}
\end{eqnarray*}
so $2n_{4+i}s_{4+i} < \frac{1}{3}f(\lfloor \frac{n_{4+i}}{3} \rfloor)$ and $4+i$ is not balanced.
\end{exmpl}

\section{Properties and complexity of $\mathcal{X}$}

Let $L$ be the hereditary closure of $\bigcup_{k=1}^{\infty} X_k$ defined above.

\begin{lem} \label{lem:recurrent}
The formal language $L$ is recurrent. In particular, there exists an infinite recurrent word $\omega\in \Sigma^{\mathbb{N}}$ such that $L$ coincides with the set of finite factors of $\omega$. Furthermore, each $X_k$ can be ordered such that $\omega$ is the limit of the first words in $X_1,X_2,\dots$ under these orders.
\end{lem}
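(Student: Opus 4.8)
The plan is to produce the recurrent word $\omega$ \emph{explicitly}, as a limit of nested ``first words,'' and to read off both the identity $L=\{\text{factors of }\omega\}$ and the recurrence of $L$ from a single structural fact: that at every scale the construction sweeps through the previous scale in a fixed cyclic order. First I would observe, by induction beginning with $X_1\subseteq\Sigma\setminus\{0\}$, that no word of any $X_k$ begins or ends with $0$, and that every word produced at a given stage is a concatenation of words from an earlier $X_j$ separated by non-empty blocks of $0$'s; this is visible directly in each displayed formula for $X_{k+1}$ (and for the $X_{k+i}$ of Cases II and III). Hence every word carries a well-defined decomposition into \emph{blocks}, each lying in some $X_j$. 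I would then fix, by induction, a cyclic ordering $\alpha^{(k)}_1,\dots,\alpha^{(k)}_{s_k}$ of each $X_k$ together with a distinguished first element $w_k:=\alpha^{(k)}_1$, subject to two compatibility requirements: (i) \emph{nesting}, that $w_k$ is a prefix of $w_{k+1}$; and (ii) \emph{cyclic sweeping}, that reading the words of the next stage in their cyclic order and expanding into $X_k$-blocks visits every element of $X_k$ in its cyclic order. Requirement (ii) is precisely what the chaining devices of the construction are designed to guarantee: the relation $\alpha_{i+1}\in Y(\alpha_i)$ in Case I, the rule $\gamma=\alpha_{p+1}$ in the preparation step of Case II, and the words $\beta_1,\dots,\beta_{m_k+1}$ that sweep $\alpha_1,\dots,\alpha_{s_k}$ cyclically (with the wrap-around in $\beta_{m_k+1}$) in Case III; in the squaring sub-cases every word $\alpha 0^{g}\beta$ with $\alpha,\beta\in X_{k+i}$ occurs, so all adjacencies are present and (ii) is automatic.

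\textit{Construction of $\omega$ and identification of $L$.} By (i) and $|w_k|=n_k\to\infty$, the $w_k$ converge to a single infinite word $\omega:=\lim_k w_k\in\Sigma^{\mathbb N}$, the ``limit of first words'' demanded by the last sentence of the statement. I would then prove $L=\{\text{factors of }\omega\}$. The inclusion $\supseteq$ is immediate, since every factor of $\omega$ is a factor of some $w_m\in X_m$ and $L$ is the hereditary closure of $\bigcup_k X_k$. For $\subseteq$ it suffices to show that every word of $X_k$ is a factor of $\omega$; iterating (ii) shows that the $X_k$-block sequence of $\omega$ runs through the cyclic order of $X_k$, and since the number of $X_k$-blocks occurring in $w_m$ tends to infinity with $m$, this sequence completes full cycles and therefore meets every element of $X_k$.

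\textit{Recurrence.} The same sweeping statement, used now to the effect that $\omega$ completes the cyclic order of each fixed $X_k$ infinitely often, shows that every $X_k$-block -- and hence every factor of $\omega$ -- occurs in $\omega$ infinitely many times; that is, $\omega$ is recurrent. Recurrence of the language $L$ is then formal: given $\alpha_1,\alpha_2\in L=\{\text{factors of }\omega\}$, fix an occurrence of $\alpha_1$ in $\omega$ and, since $\alpha_2$ recurs, an occurrence of $\alpha_2$ strictly to its right; the intervening factor $\beta$ satisfies $\alpha_1\beta\alpha_2\in L$. Consistently with the correspondence recorded earlier between recurrent hereditary languages and recurrent words, $\omega$ is then the desired recurrent infinite word with $L$ as its set of factors, which settles all three assertions.

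\textit{Main obstacle.} The genuine work lies in establishing (i) and (ii) simultaneously through every branch of the construction, and in making the cyclic orders at consecutive stages compatible, so that a single high-stage word realizes an entire cycle of any fixed lower stage. The flow between Cases I--III and the two sub-cases -- with their differing gap lengths and the cyclic wrap-arounds -- forces this to be checked case by case, and the delicate point is to keep the distinguished first elements $w_k$ nested while simultaneously arranging the sweeps to cover each $X_k$ completely.
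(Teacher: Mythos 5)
Your overall architecture --- ordering each $X_k$, nesting the first words, taking $\omega$ as their limit, and verifying the sweeping structure case by case through the chaining devices --- matches the paper's, and your derivation of $L=\{\text{factors of }\omega\}$ from a single guaranteed sweep is sound: the pairing $\alpha_1\star\alpha_2,\alpha_3\star\alpha_4,\dots$ at each stage forces the first word of $X_{k+\lceil\log_2 s_k\rceil}$ to contain all of $X_k$. But your recurrence step has a genuine gap. You invoke your requirement (ii) ``to the effect that $\omega$ completes the cyclic order of each fixed $X_k$ infinitely often,'' and this is precisely what the construction does not give you. The chaining devices control only an \emph{initial segment} of the order of $X_{k+1}$ --- roughly $\lceil s_k/2\rceil$ elements --- while $s_{k+1}$ is typically much larger ($s_{k+1}=s_k^2$ in Sub-Case (1), $s_{k+1}\approx f(3n_k)/6n_k$ in Case I); the remaining elements of the order are arbitrary. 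Consequently the level-$k$ block sequence of the nested first words is guaranteed to perform exactly \emph{one} full sweep of $X_k$, after which it enters uncontrolled territory. Moreover, the strong form of (ii) you posit --- that the \emph{entire} cyclic order of the next stage expands to a cyclic sweep of the previous one --- is false for this construction: in Case I a word $\alpha_i 0^{n_k}\beta$ occurs for every $\beta\in Y(\alpha_i)$, not just $\beta=\alpha_{i+1}$, and in the squaring sub-cases each ordered pair $(\alpha,\beta)$ occurs exactly once as a within-word adjacency, so the concatenated block sequence of the full list cannot consist only of consecutive-pair transitions; ``all adjacencies are present'' lets you \emph{begin} the order with the consecutive pairs, but it does not make the whole order sweep cyclically, let alone repeatedly.

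The paper closes exactly this gap with work your proposal omits. It proves that the all-sweeping word $\beta_1$ recurs by exhibiting, at a later stage, a single word of the form $\star\beta_1\star\beta_1\star$: it first reduces to the case of infinitely many balanced indices (the only escape being an infinite loop in Sub-Case (1), where $X_{k+i+1}=X_{k+i}0^\star X_{k+i}$ makes recurrence immediate), and then uses the wrap-around element $\alpha_{s_l}\star\alpha_1$, available in $X_{l+1}$ whenever $l$ is balanced and falls in Case I or II, to place two occurrences of the first word of $X_l$ (hence of $\beta_1$, which is a prefix of it once $l>k+\lceil\log_2 s_k\rceil$) at distinct positions $\gamma_1,\gamma_i$ of the order of $X_{l+1}$; one further application of the sweeping claim, at stage $m=l+1+\lceil\log_2 s_{l+1}\rceil$, puts $\gamma_1$ and $\gamma_i$ disjointly inside one word, yielding $\beta_1\star\beta_1$ and hence recurrence of $L$ (from which recurrence of $\omega$ follows by the formal argument you state correctly). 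Without this step --- or some substitute showing every factor recurs --- your proof establishes only that $L$ is the factor language of $\omega$, not that $L$ or $\omega$ is recurrent.
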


Using this lemma we conclude that $L = \mathcal{L}(\mathcal{X})$ for a recurrent subshift $\mathcal{X}\subseteq \Sigma^{\mathbb{N}}$, the closure of the shift orbit of $\omega$.

\begin{proof}
It suffices to show that for every $k\in \mathbb{N}$ and for every $\alpha,\alpha'\in X_k$ there exists a word $\alpha''\in X_{r}$ (for some $r\geq k$) of the form $\alpha'' = \star \alpha \star \alpha' \star$ (where the symbols $\star$ stand for arbitrary words).

For every $k\in \mathbb{N}$, enumerate $X_k = \{\alpha_1,\dots,\alpha_{s_k}\}$. 
Claim: we may assume that the first $\lceil s_k/2 \rceil$ elements of $X_{k+1}$ are as follows if $s_k$ is even:
$$ X_{k+1} = \{\alpha_1 \star \alpha_2,\ \alpha_3 \star \alpha_4,\ \dots,\ \alpha_{s_k-1} \star \alpha_{s_k},\ \dots\} $$
or, if $s_k$ is odd,
$$ X_{k+1} = \{\alpha_1 \star \alpha_2,\  \alpha_3 \star \alpha_4,\ \dots,\ \alpha_{s_k-2} \star \alpha_{s_k-1},\ \alpha_{s_k} \star \alpha_1,\ \dots\}. $$
This is clearly true for $k$ falling in Case I, since then $X_{k+1}$ contains $\alpha_i 0^{n_k} \alpha_{i+1}$ (taken cyclically modulo $s_k$) for each $1\leq i\leq s_k$. By a similar reason this is true for $k$ falling in the preparation step (the first step) of Case II. If $k$ falls in one of the steps of Sub-Case (1) of Case II, or in the second item of Sub-Case (2), then $X_{k+1}=X_k 0^{\star} X_k$, so the claim is again true (the first and third items reduce to the other cases). If $k$ falls in Case III (not necessarily the first step) then we notice that $X_{k+1}$ consists of the words $\alpha_1 0^\star \alpha_2,\ \dots,\ \alpha_{s_k-1} 0 ^\star \alpha_{s_k}$ if $s_k$ is even and of the words $\alpha_1 0^\star \alpha_2,\ \dots,\ \alpha_{s_k-2} 0 ^\star \alpha_{s_{k}-1},\ \alpha_{s_k},\ \alpha_1$ if $s_k$ is odd. 
A byproduct of this ordering of each $X_{k+1}$ is that we may assume that the first word in $X_k$ is always a prefix of the first word in $X_{k+1}$, for every $k$. Therefore the sequence of first words in $X_k$ as $k\rightarrow \infty$, converges to an infinite word $\omega$.

As a consequence of this claim, in $X_{k+\lceil \log_2{s_k}\rceil}=\{\beta_1,\dots,\beta_m\}$ we will have that the first word is:
$$ \beta_1 = \alpha_1 \star \alpha_2 \star \alpha_3 \star \alpha_4 \star \cdots \star \alpha_{s_k} \star. $$ It thus suffices to show that some further index contains a word of the form $\star \beta_1 \star \beta_1 \star$. If this is true, then it follows that $\omega$ -- the limit of the first words in the sets $X_k$, $k\in \mathbb{N}$ -- has every word in $L$ as a factor. Notice that for each $p\geq k+\lceil \log_2{s_k}\rceil$ we have that the first word in $X_p$ has $\beta_1$ as a prefix.

We first reduce to the case where we have infinitely many balanced indices. The only scenario in which this would not happen, is if we are trapped in a loop of Sub-Case (1) of Case II (see flow diagram), but this case is readily recurrent, as $X_{k+i+1} = X_{k+i} 0^\star X_{k+i}$. Assuming that we have infinitely many balanced indices, observe that we have infinitely many balanced indices falling into Cases I,II; indeed, whenever we enter Case III we must exit it after finitely many steps, as explained in its description. Furthermore, even if we re-enter Case III, we cannot stay there forever: each iteration necessarily decreases $s_k$, so eventually we will get to some $s_{k'} < 4$. If we again fall into Case III, this means that $\frac{f(3n_{k'})}{6n_{k'}} \leq s_{k'} < 4$, contradicting our assumption that $f(n)\geq 8n$ for all $n$.

If $l$ is balanced and belongs to Case I or II then we can assume that, if $s_l$ is even then $X_{l+1}$ starts with: 
\begin{align} \label{sleven}
X_{l+1} = \{ \alpha_1 \star \alpha_2,\alpha_3\star \alpha_4,\dots,\alpha_{s_{l}-1}\star \alpha_{s_l}, \alpha_{s_l} \star \alpha_{1},\dots \}
\end{align}
or, if $s_l$ is odd:
\begin{align} \label{slodd}
X_{l+1} = \{ \alpha_1 \star \alpha_2,\alpha_3\star \alpha_4,\dots,\alpha_{s_{l}-2}\star \alpha_{s_{l}-1}, \alpha_{s_l} \star \alpha_{1},\dots \}.
\end{align}
The new element (the last element written in the above two forms) belongs to these sets since if $l$ is balanced in Cases I or II then we always have $s_l \star s_1 \in X_{l+1}$.

Now fix some balanced index $l>k+\lceil \log_2 s_k\rceil$ which belongs to either Case I or II. By (\ref{sleven}) and (\ref{slodd}) we have that:
$$ X_{l+1} = \{\gamma_1,\dots,\gamma_i,\dots\} $$ where $\gamma_1,\gamma_i$ are factored by $\beta_1$ ($i=\frac{s_l}{2}+1$ if $s_l$ is even and $i=\frac{s_l + 1}{2}$ if $s_l$ is odd). 
Now by the consequence of the claim from the beginning of the proof, in $X_m$ -- for some $m$ sufficiently larger\footnote{In fact, $m=l+1+\lceil \log_2 s_{l+1} \rceil$ suffices.} than $l+1$ -- we will have a word with (disjoint) occurrences of $\gamma_1,\gamma_i$, hence factored by $\beta_1 \star \beta_1$, as desired.
\end{proof}

We now discuss another property of $\mathcal{X}$, which will be useful in analyzing its complexity.

\begin{lem} \label{lem:inf word balanced}
Let $k$ be a balanced index. Then the subshift $\mathcal{X}$ is the closure of the shift orbit of an infinite word of the form:
$$ \rho_1 0^{m_1}\rho_20^{m_2}\rho_3 0^{m_3} \cdots $$
where each $\rho_i\in X_k$ and $m_i\geq n_k$.    
\end{lem}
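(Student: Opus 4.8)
The plan is to show that the infinite word $\omega$ constructed in Lemma~\ref{lem:recurrent} already has the desired form when we cut it at a balanced index $k$. Recall from the proof of Lemma~\ref{lem:recurrent} that $\omega$ is the limit of the first words of $X_1, X_2, \dots$ under compatible orderings, where the first word of $X_j$ is a prefix of the first word of $X_{j+1}$ for every $j$. My strategy is to track how blocks of $X_k$ propagate forward: I claim that for every $j \geq k$, each word in $X_j$ is a concatenation of words from $X_k$ interspersed with blocks of zeros of length at least $n_k$, and that no two adjacent $X_k$-blocks are ever merged. Granting this, the first word of $X_j$ has the form $\rho_1 0^{m_1} \rho_2 0^{m_2} \cdots$ with $\rho_i \in X_k$ and $m_i \geq n_k$, and passing to the limit yields the claimed presentation of $\omega$ (hence of $\mathcal{X}$, which is the shift-orbit closure of $\omega$).

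First I would verify the propagation claim by induction on $j \geq k$, inspecting each construction rule. In Case~I, $X_{j+1} = \{\alpha 0^{n_j} \beta\}$ glues two $X_j$-words with a zero-block $0^{n_j}$; in the preparation step and both sub-cases of Case~II, the new words have the shape $\alpha 0^{\star} \beta$ (and in Sub-Case (2) the interior word $\beta\in X_j$ is padded by zeros on both sides, which only lengthens the zero-gaps); in Case~III, the $\beta_j$ are alternating concatenations $\alpha_{\bullet} 0^{n_k} \alpha_{\bullet} 0^{n_k} \cdots$. In every case the new word is built from $X_j$-words separated by zero-blocks. Since each $\alpha \in X_j$ is, by the inductive hypothesis, a word $\rho_1 0^{m_1} \cdots \rho_t$ in $X_k$-blocks with interior gaps $m_i \geq n_k$, and since by construction every $\alpha \in X_j$ begins and ends with a nonzero letter (we always have $X_j \subseteq \Sigma^{n_j} \setminus (0\Sigma^* \cup \Sigma^* 0)$), the new zero-blocks inserted at junctions do not fuse with terminal zero-runs inside the glued words in a way that destroys the block structure: the separating gaps are genuine, and they have length at least $n_j \geq n_k$ (in Case III the separating gaps are exactly $n_k$). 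This gives the inductive step.

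The one point requiring care—the \emph{main obstacle}—is the boundary condition at the gluing junctions. When we write $\alpha 0^{\star} \beta$ with $\alpha, \beta \in X_j$ each already decomposed into $X_k$-blocks, the constituent $\rho_i$ must remain \emph{exactly} the length-$n_k$ words of $X_k$; I must confirm that the last block of $\alpha$ and the first block of $\beta$ stay intact and are not accidentally absorbed into the padding zeros. This is precisely guaranteed by the standing invariant that members of each $X_j$ neither start nor end with $0$, so every $\rho_i$ is flanked correctly and every gap length is well-defined and $\geq n_k$. I would also note the base case $j = k$ is trivial (a single block $\rho_1 \in X_k$ with no gaps), and that the gap lower bound $m_i \geq n_k$ holds because the smallest zero-block ever introduced between consecutive $X_k$-blocks, across all cases at levels $\geq k$, is $0^{n_k}$ (arising in Case~III).

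Finally, I would assemble the conclusion: the sequence of first words of $X_j$ for $j \geq k$ is nested (each a prefix of the next) and each lies in the prescribed form $\rho_1 0^{m_1} \rho_2 0^{m_2} \cdots$ with $\rho_i \in X_k$, $m_i \geq n_k$. Their limit $\omega$ therefore also has this form, and since $\mathcal{X}$ is the shift-orbit closure of $\omega$ by Lemma~\ref{lem:recurrent}, the lemma follows. The argument is essentially bookkeeping on the recursive construction, with the sole subtlety being the no-leading/trailing-zero invariant that keeps the block decomposition unambiguous.
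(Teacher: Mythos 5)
Your overall architecture (propagate the $X_k$-block decomposition up the levels by induction, then pass to the limit word $\omega$ from Lemma \ref{lem:recurrent}) is the same as the paper's, and your emphasis on the no-leading/trailing-zero invariant is correct and necessary. But the quantitative heart of your inductive step is false. You justify the gap bound by asserting that the zero-blocks inserted when passing from $X_j$ to $X_{j+1}$ "have length at least $n_j \geq n_k$." In Sub-Case (1) of Case II this fails outright: the inserted block has length $g_{j+1} = \max\{g_j, \lceil n_j/j^2 \rceil\}$, which is typically far smaller than $n_j$. Concretely, if Case II starts at a balanced index $k'$, then $g_{k'+1} = 6n_{k'}$ while $n_{k'+1} = 8n_{k'}$, and along the Sub-Case (1) loop one has $n_{t+1} \geq 2n_t$ while $g_{t+1}$ can stay at $6n_{k'}$ (or grow only like $\lceil n_t/t^2 \rceil$), so the inserted gap is eventually a vanishing fraction of $n_j$. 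The same issue arises in the second item of Sub-Case (2), where the gap is $M - 2n_{k+i}$, bounded below only by $\tilde{g}_{k+i+1}$, not by $n_{k+i}$. So as written, your induction does not close: you never establish that these small gaps are still $\geq n_k$.

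The conclusion is nevertheless true, and the missing ingredient is exactly what the paper supplies: a \emph{monotonicity} statement for the minimal inserted gap. The paper introduces a sequence $(z_j)$, where $z_j$ is the minimal length of a zero-block inserted between $X_j$-words when forming $X_{j+1}$, and verifies case by case that $(z_j)$ is non-decreasing, with $z_j = n_j$ whenever $j$ is balanced: in Case II, $z_{k'+1} = \tilde{g}_{k'+1} = 6n_{k'} \geq n_{k'}$, then $g_{t+1} \geq g_t$ by the $\max$ in the definition of $\tilde{g}_{t+1}$, and in Sub-Case (2) one has $M - 2n_{k+i} \geq \tilde{g}_{k+i+1} \geq g_{k+i}$; Cases I and III and the preparation step insert gaps of length $\geq n_j \geq$ the previous $z$-values. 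Since your fixed index $k$ is balanced, $z_k = n_k$, and monotonicity gives $z_j \geq n_k$ for all $j \geq k$, which is the bound your induction actually needs (note the gaps are $\geq n_k$, not $\geq n_j$). With this substitution your argument becomes the paper's proof; without it, the step "every gap introduced at level $j \geq k$ has length at least $n_k$" is unproven, and your stated reason for it is incorrect.
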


\begin{proof}
If $k$ is balanced then words in $X_{k+1}$ are obtained by concatenating words from $X_k$ with blocks of zeros of length $\geq n_k$ between them.
In fact, for any $k$ (not necessarily balanced) $X_{k+1}$ is a set of words obtained by concatenating words from $X_k$ with blocks of zeros of length at least $z_k$ between them, where the sequence $(z_k)_{k=1}^{\infty}$ is non-decreasing and satisfies $z_k=n_k$ for every balanced index $k$. This is easily seen if $k$ falls in Case I or III, or in the preparation step of Case II. In Case II in the first step after the preparation step, assuming we are in Sub-Case (1), this follows since $z_{k+1} = \tilde{g}_{k+1}=6n_k \geq n_k = z_k$; afterwards (still in Sub-Case (1)) monotonicity follows from the choice of $\tilde{g}_{k+i+1}$. In Sub-Case (2), the first and the third items follow from Cases I,III respectively; and in the second item, $z_{k+i+1} = M-2n_{k+i} \geq g_{k+i} = z_{k+i}$.

Now fix a balanced index $k$ and let $\rho_1\in X_k$ be the first word with respect to the order given in the proof of Lemma \ref{lem:recurrent}. By Lemma \ref{lem:recurrent}, $\mathcal{X}$ has a dense shift orbit of a recurrent word $\omega$, which is the limit of the first words in $X_k,X_{k+1},\dots$; in other words, the length-$n_{l}$ prefix of $\omega$ is the first word of $X_{l}$, for each $l$. Decomposing each such prefix for $l>k$ as a concatenation of words from $X_k$ and blocks of zeros, we obtain by the above arguments that these blocks of zeros must be of lengths $\geq n_k$. The result follows.
\end{proof}

We now turn to analyzing the complexity function $p_\mathcal{X}$ of $\mathcal{X}$.

\begin{cor} \label{cor:balanced complexity}
Let $k$ be a balanced index. Then $p_\mathcal{X}(2n_k) \geq n_k s_k \geq \frac{1}{6}f\left(\lfloor \frac{n_k}{3} \rfloor\right)$ and $p_\mathcal{X}(n_k) \leq 3n_k s_k \leq \frac{3}{2}f(n_k)$.
\end{cor}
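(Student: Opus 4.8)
The plan is to read off both bounds directly from the explicit block description of the generating word furnished by Lemma \ref{lem:inf word balanced}. Fix the balanced index $k$ and write the generating word as $\omega = \rho_1 0^{m_1} \rho_2 0^{m_2}\cdots$ with each $\rho_i \in X_k$ and every gap $m_i \geq n_k$; since $\mathcal{X}$ is the closure of the shift orbit of $\omega$, the length-$\ell$ factors of $\mathcal{X}$ are exactly the length-$\ell$ factors of $\omega$. The two structural features I will exploit throughout are that each block $\rho_i$ has length exactly $n_k$ and neither begins nor ends with $0$, while each intervening block of zeros has length at least $n_k$. Combined with recurrence (Lemma \ref{lem:recurrent}), every word $\rho \in X_k$ occurs as a block $\rho_i$ with $i \geq 2$, and is therefore flanked by at least $n_k$ zeros on each side.

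For the lower bound on $p_\mathcal{X}(2n_k)$ I would produce $n_k s_k$ distinct factors of length $2n_k$ as follows: for each $\rho \in X_k$ and each shift $j \in \{0,1,\dots,n_k-1\}$, the word $0^j \rho 0^{n_k - j}$ has length $2n_k$ and is a factor of $\omega$, since the required $j$ leading and $n_k - j$ trailing zeros are available from the zero-blocks of length $\geq n_k$ flanking some occurrence of $\rho$. These factors are pairwise distinct: because $\rho$ starts with a nonzero letter, the number of leading zeros of $0^j \rho 0^{n_k - j}$ is exactly $j$, so $j$ is recoverable, and then $\rho$ is read off as the next $n_k$ letters. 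This yields $n_k s_k$ distinct length-$2n_k$ factors, so $p_\mathcal{X}(2n_k) \geq n_k s_k$; the estimate $n_k s_k \geq \frac16 f(\lfloor n_k/3\rfloor)$ is then immediate from the left half of balancedness, $2 n_k s_k \geq \frac13 f(\lfloor n_k/3 \rfloor)$.

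For the upper bound I would classify every length-$n_k$ factor of $\omega$ by how its window meets the block structure. Because each zero-block has length $\geq n_k$ and each $\rho_i$ has length exactly $n_k$, a window of length $n_k$ can never reach from one $\rho$-block into the next, so each factor has exactly one of four shapes: the all-zero word $0^{n_k}$; a whole block $\rho$; a nonempty proper suffix of some $\rho$ padded on the right by zeros; or a nonempty proper prefix of some $\rho$ padded on the left by zeros. There is one factor of the first shape and at most $s_k$ of the second, while for each of the $n_k-1$ admissible split points there are at most $s_k$ factors of each padded shape, giving at most $1 + s_k + 2(n_k-1)s_k \leq 3 n_k s_k$ factors in all; hence $p_\mathcal{X}(n_k) \leq 3 n_k s_k$, and the right half of balancedness, $2 n_k s_k \leq f(n_k)$, yields $3 n_k s_k \leq \frac32 f(n_k)$. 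The one step requiring genuine care—and the only real obstacle—is the claim that no window of length $n_k$ straddles two consecutive blocks: it is precisely here that the gap bound $m_i \geq n_k$ and the exact block length $n_k$ are used, and one must verify that a window starting strictly inside a block $\rho_i$ exhausts its $n_k$ letters before clearing the following zero-block, so that it can only be a right-padded suffix.
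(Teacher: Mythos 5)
Your proposal is correct and takes essentially the same route as the paper's own proof: the lower bound from the $n_k s_k$ distinct factors $0^j \rho\, 0^{n_k-j}$ (distinct precisely because words of $X_k$ neither start nor end with $0$), and the upper bound by classifying every length-$n_k$ window, via Lemma \ref{lem:inf word balanced}, as a zero-padded prefix or suffix of a word from $X_k$, with the two halves of balancedness supplying the numerical estimates $n_k s_k \geq \frac{1}{6}f\left(\lfloor \frac{n_k}{3}\rfloor\right)$ and $3n_k s_k \leq \frac{3}{2}f(n_k)$. Your added verifications --- that a length-$n_k$ window cannot straddle two $\rho$-blocks and that each $\rho \in X_k$ occurs block-aligned and flanked by zero-blocks (via recurrence) --- only make explicit steps the paper leaves implicit.
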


\begin{proof}
By Lemma \ref{lem:inf word balanced}, we have that the words $0^j \alpha 0^{n_k - j},\ j=0,\dots,n_k,\ \alpha\in X_k$ are all distinct since $\alpha$ cannot start or end with $0$. Therefore:
$$ p_\mathcal{X}(2n_k) \geq n_k s_k \geq \frac{1}{6} f\left(\lfloor \frac{n_k}{3} \rfloor\right) $$ where the last inequality follows since $k$ is balanced.

Now consider an arbitrary length-$n_k$ factor $\alpha$ of $\mathcal{X}$, or equivalently, of $\omega$. By Lemma \ref{lem:inf word balanced}, we know that $\alpha$ takes the form $0^j \alpha'$ for some $0 \leq j \leq n_k$ and $\alpha'$ a length-$n_k-j$ prefix of a word from $X_k$, or $\alpha'' 0^j$ for some $0\leq j\leq n_k$ and $\alpha''$ a suffix of a word from $X_k$. Altogether, we have at most $(2n_k+1)s_k$ options for $\alpha$, so:
$$ p_\mathcal{X}(n_k) \leq (2n_k + 1)s_k \leq 3n_k s_k \leq \frac{3}{2} f(n_k) $$
where the last inequality again follows since $k$ is balanced.
\end{proof}

\begin{lem} \label{lem:complexity bounds}
The complexity function of $\mathcal{X}$ satisfies $p_{\mathcal{X}} \sim f$.
\end{lem}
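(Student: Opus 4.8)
The plan is to prove both $p_\mathcal{X}\preceq f$ and $f\preceq p_\mathcal{X}$ by checking them only along the sequence $(n_k)$. The construction guarantees $2n_k<n_{k+1}\leq 8n_k$ for every index, so Lemma~\ref{lem:enough n_k}, applied once with $(f,g)=(p_\mathcal{X},f)$ and once with $(f,g)=(f,p_\mathcal{X})$, reduces everything to producing a single constant $D$ with $p_\mathcal{X}(n_k)\leq D f(Dn_k)$ and $f(n_k)\leq D\,p_\mathcal{X}(Dn_k)$ for \emph{every} index $k$. At every balanced index these are precisely Corollary~\ref{cor:balanced complexity}, so the work is to control the two families of non-balanced indices: the interior steps of Case~III, and the steps of Sub-Case~(1) of Case~II.

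For the interior of Case~III I would first use the continuation condition $s_{k+i}>f(n_{k+i})/2n_{k+i}$ together with $n_{k+i}s_{k+i}\asymp n_ks_k$ to get $f(n_k)\leq f(n_{k+i})\leq 4f(n_k)$, so $f$ is constant up to a bounded factor across the whole case. On the complexity side, Lemma~\ref{lem:inf word balanced} at the balanced index $k$ opening the case exhibits $\omega$ as a cyclic concatenation of the $s_k$ words of $X_k$ separated by gaps $\geq n_k$; since we only continue while $s_{k+i}\geq 4$, i.e. $2^{i+1}\leq s_k$, every relevant length $n_{k+i}$ is below the period $2n_ks_k$, so the factor count saturates at $n_ks_k\lesssim p_\mathcal{X}(n_{k+i})\lesssim 2n_ks_k$. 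Comparing with $f(n_{k+i})\asymp f(n_k)\asymp n_ks_k$ yields both estimates simultaneously.

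Sub-Case~(1) is the main obstacle, since there only the upper half of balancedness holds and the complexity can lag $f$ by an exponential factor at a single length. The upper bound is still direct: because $X_{k+i+1}=X_{k+i}0^{g}X_{k+i}$ realizes \emph{all} pairs from $X_{k+i}$ as adjacencies, a count of length-$n_{k+i}$ factors (which straddle at most two blocks) gives $p_\mathcal{X}(n_{k+i})\lesssim n_{k+i}s_{k+i}^2=n_{k+i}s_{k+i+1}\lesssim f(n_{k+i+1})\leq f(3n_{k+i})$, using the Sub-Case~(1) inequality $s_{k+i}^2\leq f(n_{k+i+1})/2n_{k+i+1}$. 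The lower bound is the delicate point, and here the construction pays off. The idea is that $\log s_{k+i}=2^{i-1}\log s_{k+1}$ doubles at each step, and $\log f(n_{k+i})$ grows at a comparable rate. Three facts combine: iterating $f(2n)\leq f(n)^2$ gives $f(2^{j}n)\leq f(n)^{2^{j}}$; the gaps $g_{k+i+1}\lesssim n_{k+i}/(k+i)^2$ make $\prod_{j\ge1}(1+\tfrac12(k+j)^{-2})$ converge, so $n_{k+i}\leq K\,2^{i-1}n_{k+1}$ for a constant $K=O(1)$; and the preparation step was designed so that $s_{k+1}\geq f(\lfloor n_k/3\rfloor)$. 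Together these give
$$ \frac{\log f(n_{k+i})}{\log s_{k+i}}\leq \frac{2^{i-1}K\log f(n_{k+1})}{2^{i-1}\log s_{k+1}}=K\,\frac{\log f(n_{k+1})}{\log s_{k+1}}\leq K\,\frac{\log f(8n_k)}{\log f(\lfloor n_k/3\rfloor)}\leq 32K, $$
where the last step is again $f(2n)\leq f(n)^2$ (so $f(8n_k)\leq f(\lfloor n_k/3\rfloor)^{32}$). Thus $\log f(n_{k+i})/\log s_{k+i}$ stays bounded throughout the loop.

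Finally I would convert this bounded ratio into the lower bound by a constant look-ahead. Since each $X_{k+j}$ word is a distinct length-$n_{k+j}$ factor of $\omega$, we have $p_\mathcal{X}(n_{k+j})\geq s_{k+j}$; advancing $t$ further Sub-Case~(1) steps squares $s$ a total of $2^{t}$ times, so $\log p_\mathcal{X}(n_{k+i+t})\geq\log s_{k+i+t}=2^{t}\log s_{k+i}$, while the argument grows by at most $3^{t}n_{k+i}$. Choosing $t$ with $2^{t}\geq 32K$ gives $p_\mathcal{X}(Dn_{k+i})\geq f(n_{k+i})$ with $D=3^{t}=O(1)$. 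If the loop terminates before the $t$-th step it does so into Sub-Case~(2), i.e. into a balanced index within those $t$ steps, where Corollary~\ref{cor:balanced complexity} supplies an even larger complexity; and an infinite Sub-Case~(1) loop (where no later balanced index is ever reached, the scenario flagged as ``readily recurrent'' in Lemma~\ref{lem:recurrent}) is handled by the same look-ahead, which never needs to exit. Assembling the three regimes, every index satisfies both estimates with a common constant, and Lemma~\ref{lem:enough n_k} yields $p_\mathcal{X}\sim f$. The genuinely subtle point, and where I would be most careful, is this interchange in Sub-Case~(1) of the $2^i$-growth of $\log s_{k+i}$ with the near-exponential growth of $\log f(n_{k+i})$; everything hinges on reading $f(2n)\leq f(n)^2$ as the clean bound $f(2^{j}n)\leq f(n)^{2^{j}}$ rather than as the lossy step-by-step estimate $f(n_{k+i})\leq f(n_{k+i-1})^{4}$.
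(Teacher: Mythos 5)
Your overall architecture coincides with the paper's: reduce to the sequence $(n_k)$ via Lemma \ref{lem:enough n_k}, settle balanced indices by Corollary \ref{cor:balanced complexity}, and treat the Case III interior and the Sub-Case (1) loop of Case II separately. Your Sub-Case (1) \emph{lower} bound is a genuine variant: the paper gets it pointwise, with no look-ahead, from $p_\mathcal{X}(n_{k+i+1})\geq s_{k+1}^{2^i}\geq f(\lfloor n_k/3\rfloor)^{2^i}\geq f(2^i\lfloor n_k/3\rfloor)\geq f\left(\lfloor \frac{1}{6K}n_{k+i+1}\rfloor\right)$, whereas you bound the ratio $\log f(n_{k+i})/\log s_{k+i}$ and recover the inequality by advancing $t=O(1)$ further squarings. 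Both rest on the same two ingredients ($n_{k+i+1}\leq K\cdot 2^i n_k$ from the convergent product, and the iterated reading $f(2^jn)\leq f(n)^{2^j}$ of submultiplicativity, which you correctly identify as the crux), and your exit analysis (a balanced index inside the $t$-step window, handled by Corollary \ref{cor:balanced complexity} and monotonicity) is sound, at the cost of the harmless inner constant $3^t$. Two soft spots there are worth noting: the convergent product also needs the initial regime $g_{k+1}=\cdots=g_{k+i_0}=6n_k$, where $g_t/n_t$ decays geometrically rather than like $t^{-2}$, and it needs the persistence claim that once $g_{t+1}=\lceil n_t/t^2\rceil$ holds it holds for all later $t$; the paper proves both, and your blanket assertion $g_{k+i+1}\lesssim n_{k+i}/(k+i)^2$ is actually false in the constant-gap regime, though the conclusion $n_{k+i}\leq K\,2^{i-1}n_{k+1}$ is unaffected. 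Your Case III treatment (showing $f$ is constant up to a bounded factor across the case via the continuation condition) also works, though it is more fragile than the paper's, which simply compares $p_\mathcal{X}$ monotonically to the exit balanced index instead of arguing about the periodic local structure of $\omega$.

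The genuine gap is in your Sub-Case (1) \emph{upper} bound. You count length-$n_{k+i}$ factors as $\lesssim n_{k+i}s_{k+i}^2$ on the grounds that a factor straddles at most two blocks; but a straddling factor $\alpha'0^p\alpha''$ is determined not only by the two flanking words and the offset, but also by the length $p$ of the zero run, and $p$ ranges over $\{g_{k+i},g_{k+i+1},\dots\}\cap[1,n_{k+i}]$ (the third case of (\ref{cases pre suf})). Since the gaps roughly double once $g_{t+1}=\lceil n_t/t^2\rceil$ kicks in, this set has size of order up to $\log_2(k+i)$, which is unbounded in $i$; factors with the same flanks but different $p$ are distinct words, and each admissible $p$ contributes on the order of $n_{k+i}$ distinct factors through at least one realized adjacency. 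So your bound with an absolute implied constant is false as stated. The repair is exactly the paper's maneuver: bound the number of admissible $p$ by $K'\log_2(k+i)$, absorb the logarithm via $\log_2(k+i)\leq s_{k+i}^2=s_{k+i+1}$ (valid because $s_{k+i+1}\geq f(\lfloor n_k/3\rfloor)^{2^i}$ is doubly exponential in $i$), and pay one extra Sub-Case (1) step, yielding $p_\mathcal{X}(n_{k+i})\leq (2K'+3)n_{k+i}s_{k+i}^4\leq (2K'+3)f(n_{k+i+2})\leq(2K'+3)f(9n_{k+i})$. Your chain, which looks only one step ahead (via $n_{k+i}s_{k+i+1}\lesssim f(n_{k+i+1})$), must accordingly be lengthened to two steps. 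With that correction, your argument goes through.
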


\begin{proof}
It suffices to prove that $p_\mathcal{X}(Cn_k)\geq f(\lfloor c n_k \rfloor)$ and that $p_\mathcal{X}(n_k)\leq Df(n_k)$ for some constants $C,c,D>0$. That this suffices follows from Lemma \ref{lem:enough n_k}.
These inequalities hold for balanced indices by Corollary \ref{cor:balanced complexity}. 
It remains to establish such inequalities for indices in either Case II or III.

Look at $k$ balanced, which falls in Case II. We may assume that: $$ s_{k+1} = \max \{ s_k , f(\lfloor n_k/3 \rfloor)\}, $$ since otherwise we are again in a balanced position. Now for every $i\geq 1$, we may assume that we are in Sub-Case (1) (this is because Sub-Case (2) holds for at most one isolated index, and then we are in a balanced position; we may omit these sporadic indices, and the indices immediately before them again by Lemma \ref{lem:enough n_k}):
\begin{align} \label{eqn:7}
p_\mathcal{X}(n_{k+i+1}) \geq s_{k+i+1} = s_{k+1}^{2^i}\geq f(\lfloor n_k/3 \rfloor)^{2^i} \geq f(2^i\lfloor n_k/3 \rfloor)    
\end{align}
We next claim that $n_{k+i+1} \leq C \cdot 2^i \cdot n_k$ for some constant $C>0$.

Recall that in Sub-Case (1) we have, by definition:
$$ \frac{n_{k+i+1}}{2n_{k+i}} = 1 + \frac{g_{k+i+1}}{2n_{k+i}} $$
we have:
\begin{align} \label{eqn:consecutive}
 \frac{n_{k+i+1}}{2^i n_{k+1}} = \left(1+\frac{g_{k+2}}{2n_{k+1}}\right)\left(1+\frac{g_{k+3}}{2n_{k+2}}\right)\cdots\left(1+\frac{g_{k+i+1}}{2n_{k+i}}\right).
\end{align}

Recall that $g_{k+1} = 6n_k,\ n_{k+1}=8n_k$. There are two cases to consider; the first is that $g_{k+i+1}=g_{k+i}=\cdots = g_{k+1}=6n_k$. In this case,
$$ (\ref{eqn:consecutive}) \leq \left(1+\frac{6n_k}{16n_k}\right)\left(1+\frac{6n_k}{32n_k}\right)\cdots = \prod_{i=0}^{\infty} \left(1+\frac{3}{2^{i+3}}\right) \leq 2. $$
The second case is that for some $i_0$ we have: $6n_k = g_{k+1} = \cdots = g_{k+i_0}$ and $g_{k+i_0+1} = \lceil \frac{n_{k+i_0}}{(k+i_0)^2} \rceil$.
We claim that for $r\geq 1$, we have that: $$ g_{k+i_0+r} = \lceil \frac{n_{k+i_0+r-1}}{(k+i_0+r-1)^2} \rceil. $$ Indeed, once $g_{t+1} = \lceil \frac{n_t}{t^2} \rceil$, the options for $g_{t+2}$ are $g_t=\lceil \frac{n_t}{t^2} \rceil,\ \lceil \frac{n_{t+1}}{(t+1)^2} \rceil$. Recall that $n_t\geq 2^t$. 
The ratio of these two options is (we may assume that $t$ is sufficiently large, e.g. $t\geq 7$, so that $t^2<\frac{1}{2}2^t$ and that $\left(\frac{t}{t+1}\right)^2\geq \frac{3}{4}$):
\begin{eqnarray*}
\lceil \frac{n_{t+1}}{(t+1)^2} \rceil / \lceil \frac{n_t}{t^2} \rceil & \geq & \frac{n_{t+1}}{(t+1)^2} \cdot \frac{t^2}{n_t+t^2} \\ & \geq & \frac{n_{t+1}}{(t+1)^2} \cdot \frac{t^2}{\frac{3}{2}n_t} \\ & \geq &  \frac{2n_t}{\frac{3}{2}n_t}\cdot \frac{3}{4} \geq 1
\end{eqnarray*}
and therefore $g_{t+1} = \lceil \frac{n_{t+1}}{(t+1)^2} \rceil$ for $t\geq i_0$. Furthermore, for such indices 
$g_{t+1} \leq \frac{n_{t+1}}{(t+1)^2}+1$ so
$\frac{g_{t+1}}{2n_t} \leq \frac{n_{t+1}/n_t}{2(t+1)^2} + \frac{1}{2n_t} \leq \frac{3}{2(t+1)^2}+2^{-t}$. 
Therefore:
\begin{eqnarray}
(\ref{eqn:consecutive}) \leq \prod_{i=0}^{\infty} \left(1+\frac{3}{2^{i+3}}\right) \cdot \prod_{i=1}^{\infty} \left(1+\frac{3}{2(k+i+1)^2}+2^{-(k+i)}\right) < K <\infty
\end{eqnarray}
for some universal constant $K>0$. It now follows that in (\ref{eqn:7}), $n_{k+i+1} \leq K\cdot 2^i\cdot n_k$ and therefore:
\begin{align} \label{lower bound for Case II}
p_\mathcal{X}(n_{k+i+1}) \geq f(2^i\lfloor n_k/3 \rfloor ) \geq f(\lfloor \frac{1}{6K} n_{k+i+1} \rfloor)
\end{align}
and so the desired lower bound applies for $k$ in Case II.

We now turn to establish an upper bound on $p_\mathcal{X}(n_{k+i})$. By Lemma \ref{lem:inf word balanced}, every length-$n_{k+i}$ factor of $\mathcal{X}$ takes one of the following forms:
\begin{align} \label{cases pre suf}
\alpha' 0^p, & \ & \alpha'\ \text{a suffix of  length}\ n_{k+i} - p\ \text{of a word from}\ X_{k+i} \\
0^p\alpha'', & \ & \alpha''\ \text{a prefix of  length}\ n_{k+i} - p\ \text{of a word from}\ X_{k+i} \nonumber \\
\alpha' 0^p \alpha'', & \ & \alpha'\ \text{a suffix of  length}\ q\leq n_{k+i} - p\ \text{of a word from}\ X_{k+i} \nonumber \\
 & \ & \alpha''\ \text{a prefix of  length}\ n_{k+i} - p - q\ \text{of a word from}\ X_{k+i}. \nonumber 
 \end{align}

The number $p$ of zeros in the third case of (\ref{cases pre suf}) block satisfies: $$ p\in \{g_{k+i},g_{k+i+1},\dots\} \cap [1,n_{k+i}]. $$ We claim that this set is rather small. As mentioned before, once $g_{t+1}=\lceil \frac{n_t}{t^2} \rceil$ for $t=t_0$ (otherwise, there is only one option for such $p$) we will have this formula for all $t_0 \leq t\leq k+i$. Since the numerator $n_t$ at least doubles itself ($n_{t+1}\geq 2n_t$) and the denominator is quadratic, it follows that there exists some constant $K' > 0$ such that $g_{k+i+\lfloor K'\log_2 (k+i) \rfloor} > n_{k+i}$, so $p$ in the third case of (\ref{cases pre suf}) has at most $K'\log_2 (k+i)$ options. Therefore:
\begin{eqnarray*}
p_\mathcal{X}(n_{k+i}) & \leq &  \underbrace{(2n_{k+i}+1)s_{k+i}}_{\substack{\text{the first two cases} \\ \text{of (\ref{cases pre suf})}}}+\underbrace{(n_{k+i}+1)}_{\substack{\text{options for the length} \\ \text{of $\alpha'$ in the third case}}} \underbrace{K'\log_2(k+i)}_{\substack{\text{options for the length} \\ \text{of the gap of zeros}}} \underbrace{s_{k+i}^2}_{\substack{\text{the participating} \\ \text{prefix and suffix} }} \\
& \leq & 3n_{k+i}s_{k+i} + 2K'n_{k+i}\log_2 (k+i) s_{k+i}^2 \\ & \leq & (2K'+3)n_{k+i}s_{k+i}^4 \leq  (2K'+3)n_{k+i+1}s_{k+i+1}^2 \\ & \leq & (2K'+3)f(n_{k+i+2}) \leq (2K'+3)f(9n_{k+i}).
\end{eqnarray*} 
We used the bound $\log_2(k+i)\leq s_{k+i}^2=s_{k+i+1}$, which holds since $s_{k+i+1}=s_{k+1}^{2^i}\geq f(\lfloor n_k/3 \rfloor)^{2^i} \geq n_k^{2^i} \geq 2^{k2^i} \geq \log_2(k+i)$. We also used the assumption of Sub-Case (1) for $k+i+1$ and $k+i+2$ -- we may assume this, since as we noted, we may neglect Sub-Case (2) indices in this analysis.

Finally, let us analyze the complexity for indices falling in Case III. Let $k$ be a balanced index starting Case III and let $j \geq 1$ be such that $k+j$ is still in the routine of Case III. Then $p_\mathcal{X}(2n_k) \geq s_k n_k$ by Corollary \ref{cor:balanced complexity}. Case III cannot hold forever, so let $k+i$ be the next balanced index (with $j \leq i$). 
There are $s_{k+i}=\lceil \frac{s_k}{2^i} \rceil$ words in $X_{k+i}$ and $n_{k+i}=(2^{i+1}-1)n_k$, so $s_{k+i}n_{k+i} \leq 4 s_k n_k$.
Since $k+i$ is balanced, we know by Corollary \ref{cor:balanced complexity} that 
$p_\mathcal{X}(n_{k+i}) \leq 3n_{k+i}s_{k+i}\leq 12 s_k n_k$, and so:
\begin{eqnarray*}
\frac{1}{24}f(\lfloor n_{k+j}/3 \rfloor) & \leq &  \frac{1}{24} f(\lfloor n_{k+i}/3 \rfloor) \\ & \leq & \frac{1}{4} s_{k+i} n_{k+i} \leq s_k n_k \\ &  \leq & p_\mathcal{X}(2n_k) \leq p_\mathcal{X}(2n_{k+j})    
\end{eqnarray*}
and: 
$$ p_\mathcal{X}(n_{k+j}) \leq p_\mathcal{X}(n_{k+i}) \leq 12 s_k n_k \leq 6f(n_k) \leq 6 f(n_{k+j}). $$
It follows that $p_\mathcal{X} \sim f$.
\end{proof}

\begin{proof}[{Proof of Theorem \ref{thm:main}}]
Let $f$ be a function satisfying the assumptions of the theorem. We follow the construction in Section \ref{sec:construction}. By Lemma \ref{lem:recurrent}, $\mathcal{X}$ is recurrent. By Lemma \ref{lem:complexity bounds}, $p_\mathcal{X} \sim f$.
\end{proof}

\begin{rem}
The statement of Theorem \ref{thm:main} promises to realize any function $f$ satisfying the conditions mentioned therein as the complexity function of a subshift, up to asymptotic equivalence. In fact, the (inner and outer) constants of the asymptotic equivalence are bounded universally over all functions, as can be seen by tracing these constants along the proof.
\end{rem}

\section{Minimal subshifts}

In this section we prove Proposition \ref{prop:minimal}. Namely, we prove that a non-decreasing, unbounded, submultiplicative function $f\colon \mathbb{N}\rightarrow \mathbb{N}$ is asymptotically equivalent to the complexity function of a minimal subshift, up to a linear error factor.  We assume that $0$ is not in the range of $f$, and indeed, the complexity function of any subshift cannot assume this value. 
We start by illustrating the proof strategy. 

\subsection{Proof strategy}
Let $X$ be a finite alphabet, $|X|\geq 2$. 
Given sets $A,B$ of words over the alphabet $X$, we denote $AB=\{\alpha\beta\ |\ \alpha\in A, \beta\in B\}$. We construct, for each $n\geq 0$, sets $W(2^n),C(2^n)\subseteq X^{2^n}$ such that $W(1)=X$ and $C(2^n)\subseteq W(2^n)$ and $W(2^{n+1})=W(2^n)C(2^n)$.
This construction, introduced by Smoktunowicz and Bartholdi in \cite{SB} (modified in \cite{BGSel} for recurrent words and in \cite{BGIsr} to uniformly recurrent words under additional assumptions), gives a collection of infinite words $W(1)C(1)C(2)C(2^2)\cdots\subseteq X^{\infty}$, whose shift closure we denote by $\mathcal{X}$. Let $L$ denote the set of finite factors of $\mathcal{X}$.

The first main task is to ensure that $\mathcal{X}$ is minimal. Toward this end, we hold --- throughout the construction of $W(2^n),C(2^n)$ --- a dynamical sequence of words, which can be thought of as `virtual memory' state, and that eventually every word from $\bigcup_{n=1}^{\infty} W(2^n)$ appears in it at some point. In selected steps (when $n\in \{\mu_1,\mu_2,\dots\}$, a carefully chosen sequence), we produce $C(2^n)$ such that the first word in the current state of the memory is the prefix of all words in $C(2^n)$. This, together with the fractal nature of the words in $\mathcal{X}$, ensures uniform recurrence. 
The second goal is to ensure that the complexity of $\mathcal{X}$ is  bounded by $f(n)$ from below and by $n f(n)$ from above. 

We start with two auxiliary lemmas: Lemma \ref{lem:SB} enables to control the complexity of the resulting subshift; it is taken from \cite{SB} and brought here for completeness. Lemma \ref{lem:subexp} shows that we can indeed pick enough $\mu_n$'s as above to afford the construction. Finally, we prove Proposition \ref{prop:minimal}.

\subsection{Auxiliary lemmas}

We start with the following auxiliary lemma on submultiplicative functions.

\begin{lem} \label{lem:subexp}
Let $f\colon \mathbb{N}\rightarrow \mathbb{N}$ be a subexponential, non-decreasing function satisfying  $\lim_{n\to\infty}f(n)=\infty$ and $f(2n)\leq f(n)^2$ for every $n\in \mathbb{N}$. We assume that $0$ is not in the range of $f$.
Then there exists a sequence $1<\mu_1<\mu_2<\cdots$ such that for every $n$ we have $\mu_n > n$ and
\begin{align} \label{eqn:space} \lceil \frac{f(2^{\mu_n+1})}{f(2^{\mu_n})} \rceil \leq \frac{f(2^{\mu_n})}{4f(2^{n})}. \end{align}
\end{lem}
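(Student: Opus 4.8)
The plan is to rewrite everything in terms of the sequence $a_m := f(2^m)$. The hypothesis $f(2n)\le f(n)^2$ gives $a_{m+1}\le a_m^2$, while subexponentiality of $f$ becomes $L_m/2^m\to 0$, where $L_m:=\log a_m$. Since the statement is only of interest when $f$ is unbounded (a bounded complexity function is eventually periodic and handled separately), I assume $a_m\to\infty$. I would first reduce the whole lemma to producing a single strictly increasing sequence $(\mu_N)$ with $\mu_1\ge 2$ along which $a_{\mu_N+1}/a_{\mu_N}^2\to 0$. Indeed, using $\lceil x\rceil \le x+1$ one gets
$$\left\lceil \frac{a_{\mu_N+1}}{a_{\mu_N}}\right\rceil \le \frac{a_{\mu_N+1}}{a_{\mu_N}^2}\,a_{\mu_N}+1,$$
so once $a_{\mu_N+1}/a_{\mu_N}^2\le \tfrac{1}{8a_n}$ and $a_{\mu_N}\ge 8a_n$ --- both of which hold for all large $N$ once $n$ is fixed, since $a_{\mu_N}\to\infty$ --- the right-hand side is at most $\frac{a_{\mu_N}}{8a_n}+\frac{a_{\mu_N}}{8a_n}=\frac{f(2^{\mu_N})}{4f(2^{n})}$, which is exactly (\ref{eqn:space}). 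This splits the factor $\tfrac14$ into two halves, the first absorbing the main term and the second the $+1$ coming from the ceiling.

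The heart of the argument is the claim that for every $\epsilon\in(0,1)$ the set $G_\epsilon=\{m: a_{m+1}<\epsilon a_m^2\}$ is infinite. I would prove this by contradiction. If $G_\epsilon$ were finite, then $a_{m+1}\ge \epsilon a_m^2$ for all $m$ beyond some $M_1$; writing $c=-\log\epsilon>0$ this reads $L_{m+1}-c\ge 2(L_m-c)$. Since $a_m\to\infty$ we may pick $M_0\ge M_1$ with $L_{M_0}>c$, and iterating the recursion yields
$$L_m\ge c+2^{m-M_0}(L_{M_0}-c),$$
so that $\liminf_m L_m/2^m\ge 2^{-M_0}(L_{M_0}-c)>0$, contradicting subexponentiality $L_m/2^m\to 0$. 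This is the step I expect to be the main obstacle, both because it is the only place where the subexponential hypothesis is used in an essential way, and because its content is precisely that a persistent lower bound on the one-step ratios $a_{m+1}/a_m^2$ would force $f$ to grow at least exponentially along powers of two.

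Finally I would build the sequence by diagonalization. Each $G_{2^{-N}}$ is infinite by the claim, so I can choose $\mu_N\in G_{2^{-N}}$ inductively with $\mu_N>\mu_{N-1}$ and $\mu_1\ge 2$. Along this sequence $a_{\mu_N+1}/a_{\mu_N}^2<2^{-N}\to 0$, which is exactly the input required by the reduction of the first paragraph; together with $a_{\mu_N}\to\infty$ this yields (\ref{eqn:space}) for every fixed $n$ and all sufficiently large $N$. The remaining steps --- deriving $a_{m+1}\le a_m^2$ from the displayed submultiplicativity and translating \emph{subexponential} into $L_m/2^m\to 0$ --- are routine and would be dispatched in a line each.
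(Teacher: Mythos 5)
Your proof is correct and follows essentially the same route as the paper: reduce (\ref{eqn:space}) to producing a subsequence along which $f(2^{\mu_N+1})/f(2^{\mu_N})^2\to 0$, then rule out its non-existence by iterating the doubling recursion on $L_m=\log f(2^m)$ to contradict subexponentiality. If anything, your bookkeeping is more careful than the paper's: the paper's displayed chain $f(2^N)>\varepsilon f(2^{N-1})^2>\varepsilon^2 f(2^{N-2})^{2^2}>\cdots>\varepsilon^N b^{2^N}$ carries the wrong powers of $\varepsilon$ (the correct exponent is $2^N-1$, so the limit of $f(2^N)^{1/2^N}$ is $\varepsilon b$, which need not exceed $1$ when $\varepsilon$ is small), and your device of starting the iteration at a base point $M_0$ with $L_{M_0}>c$, i.e.\ $a_{M_0}>1/\varepsilon$, which turns the recursion into $L_{m}-c\ge 2^{m-M_0}(L_{M_0}-c)$, closes exactly this gap. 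Your explicit unboundedness assumption is also genuinely needed rather than cosmetic: for bounded $f$ (eventually constant, say $\equiv c$) the ratio $a_{m+1}/a_m^2$ is eventually the nonzero constant $1/c$ and (\ref{eqn:space}) itself fails (left side $\ge 1$, right side $\le 1/4$), so the lemma implicitly presumes $f$ unbounded -- a harmless restriction, since Proposition \ref{prop:minimal} holds trivially for bounded $f$ (e.g.\ via a Sturmian subshift).
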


\begin{proof}
It suffices to find a sequence $(a_k)_{k=1}^{\infty}$ for which
$$ \lim_{k\rightarrow \infty} \frac{f(2^{a_k+1})}{f(2^{a_k})^2} = 0 $$
since then, given $n$ and assuming that $1<\mu_1<\cdots<\mu_n$ have been defined, we can take some $a_k> \mu_n$ such that $\frac{f(2^{a_k+1})}{f(2^{a_k})^2}<\frac{1}{8f(2^n)}$ and therefore $\lceil \frac{f(2^{a_k+1})}{f(2^{a_k})} \rceil \leq 2\frac{f(2^{a_k+1})}{f(2^{a_k})} < \frac{f(2^{a_k})}{4f(2^n)}$. Thus, taking $\mu_{n+1}:=a_k$, Inequality \eqref{eqn:space} holds. 
Assume to the contrary that such a sequence does not exist; then: $$ \varepsilon := \inf_{n\in \mathbb{N}} \frac{f(2^{n+1})}{f(2^n)^2} > 0. $$ Fix $n_0\in \mathbb{N}$ such that $f(2^{n_0})>1/\varepsilon$, which is possible by the assumption that $\lim_{n\rightarrow \infty} f(n)=\infty$. Then, for every $N\in \mathbb{N}$, we have that
\begin{align} f(2^{N}) \geq \varepsilon f(2^{N-1})^2 \geq \varepsilon^3 f(2^{N-2})^{2^2} \geq \cdots \geq \varepsilon^{2^{N-n_0}-1} f(2^{n_0})^{2^{N-n_0}}. \nonumber \end{align} Thus
\begin{eqnarray*} f(2^N)^{\frac{1}{2^N}} & \geq & \varepsilon^{\frac{2^{N-n_0}-1}{2^N}} f(2^{n_0})^{2^{-n_0}} \\ & = & \varepsilon^{2^{-n_0} - 2^{-N}} f(2^{n_0})^{2^{-n_0}} \xrightarrow{N\rightarrow \infty} (\varepsilon f(2^{n_0}))^{2^{-n_0}} > 1, \nonumber \end{eqnarray*}
contradicting the assumption that $f$ is subexponential (recall that for a subexponential function $f$ we have that $\lim_{N\rightarrow \infty} f(N)^{1/N} = 1$).
\end{proof}

We now recall a result from \cite{SB} describing the finite subwords that arise from the aforementioned construction of $\mathcal{X}$ in an efficient way.

\begin{lem}\label{lem:SB}{\cite[Lemma 6.3]{SB}} Suppose that $W(2^n),C(2^n)\subseteq X^{2^n}$ satisfy $W(1)=X,C(2^n)\subseteq W(2^n),W(2^{n+1})=W(2^n)C(2^n)$ and let $L$ be the hereditary language of all finite subwords of $\bigcup_{n=1}^{\infty} W(2^n)$. 
Let $w\in L$ be of length $2^m$. Then $w$ is a factor of a word from $W(2^m)C(2^m)\cup C(2^m)W(2^m)$.
\end{lem}

\begin{proof}
For $m=0$, the statement is evident.
Assume that the lemma does not hold. Let $m$ be the smallest positive integer such that there exists $w\in L$ of length $2^m$, not factoring any word from $W(2^m)C(2^m)\cup C(2^m)W(2^m)$. Notice that since $w\in L$, there exists some $n$ such that $w$ factors some $u\in W(2^n)C(2^n)\cup C(2^n)W(2^n)$. We may further assume that $n$ is smallest possible, and notice that by the assumption toward contradiction, $n>m$. 

Recall that for each $n\geq 1$ we have $W(2^n)=W(2^{n-1})C(2^{n-1})$ and $C(2^n)\subseteq W(2^n)$ by construction, so $$W(2^n)C(2^n)\cup C(2^n)W(2^n) \subseteq W(2^{n-1})C(2^{n-1})W(2^{n-1})C(2^{n-1})$$
and we can write $$u=u_1u_2u_3u_4,\ \ \ u_1,u_3\in W(2^{n-1}),\ u_2,u_4\in C(2^{n-1}).$$
If $w$ factors either $u_1u_2$ or $u_3u_4$ then $w$ factors a word from $W(2^{n-1})C(2^{n-1})$, and if $w$ factors $u_2u_3$ then $w$ factors a word from $C(2^{n-1})W(2^{n-1})$; either way, we obtain a contradiction to the minimality of $n$. It follows that $w$ factors $u_1u_2u_3u_4$ but not any of $u_1u_2,u_2u_3,u_3u_4$, so $|w|>2^{n-1}$. By assumption, $n>m$, so $|w|>2^m$, contradicting the way we chose $w$.
\end{proof}

\subsection{Proof of Proposition \ref{prop:minimal}}

\begin{proof}[{Proof of Proposition \ref{prop:minimal}}]
Let $f\colon \mathbb{N}\rightarrow \mathbb{N}$ be a non-decreasing function satisfying $\lim_{n\rightarrow \infty} f(n)=\infty$ and $f(2n)\leq f(n)^2$ for every $n\in \mathbb{N}$. We may assume that $f$ is subexponential, for otherwise it is equivalent to the complexity function of any minimal subshift of positive entropy (see e.g. \cite{Grill}). 
By Lemma \ref{lem:subexp}, there exists a sequence $1<\mu_1<\mu_2<\cdots$ such that for every $n\geq 1$, we have $\mu_n>n$ and
\begin{align} \label{eqn:reallygoestozero}
\lceil \frac{f(2^{\mu_n+1})}{f(2^{\mu_n})} \rceil \leq \frac{f(2^{\mu_n})}{4f(2^n)}.
\end{align}

Let $X$ be a set of cardinality $b:=f(1)$.
As in \cite[Subsection 3.2]{BGSel}, define a sequence of integers $c_{2^n}$ for $n\geq 0$ as follows. First, $c_1=\lceil \frac{f(2)}{f(1)} \rceil$. For $n\geq 1$:
\begin{align} \label{eqn:construction}
c_{2^n} = \begin{cases}
    \lceil \frac{f(2^{n+1})}{f(2^n)} \rceil & \text{if}\  bc_1c_2c_{2^2}\cdots c_{2^{n-1}}<2f(2^n) \\
      \lfloor \frac{f(2^{n+1})}{f(2^n)} \rfloor & \text{if}\  bc_1c_2c_{2^2}\cdots c_{2^{n-1}}\geq 2f(2^n).
\end{cases}
\end{align}
As in \cite[Lemma 3.3]{BGSel}, we have
\begin{lem}[{\cite[Lemma 3.3]{BGSel}}] \label{lem:3.3}
We have $f(2^{n+1}) \leq bc_1c_2c_{2^2}\cdots c_{2^n} \leq 4 f(2^{n+1})$ for all $n\geq 0$.
\end{lem} 
For the reader's convenience, we repeat the proof.
\begin{proof}[{Proof of Lemma \ref{lem:3.3}}]
For $n=0$, we have $$f(2)\leq f(1)\lceil \frac{f(2)}{f(1)} \rceil = bc_1 \leq f(1)\left(\frac{f(2)}{f(1)}+1\right)=f(1)+f(2)\leq 4f(2).$$ Assume that for some $n\geq 1$ we have $f(2^{n}) \leq bc_1c_2c_{2^2}\cdots c_{2^{n-1}} \leq 4 f(2^{n})$ and let us prove $f(2^{n+1}) \leq bc_1c_2c_{2^2}\cdots c_{2^{n}} \leq 4 f(2^{n+1})$.

\smallskip

\noindent \emph{Case I: $bc_1c_2c_{2^2}\cdots c_{2^{n-1}} < 2f(2^n)$.} Then $c_{2^n}=\lceil \frac{f(2^{n+1})}{f(2^n)} \rceil$, so:
$$f(2^{n+1}) = f(2^n)\cdot \frac{f(2^{n+1})}{f(2^n)} \leq bc_1c_2c_{2^2}\cdots c_{2^{n-1}}c_{2^n}$$
and
$$ bc_1c_2c_{2^2}\cdots c_{2^{n-1}}c_{2^n} < 2f(2^n)\left(\frac{f(2^{n+1})}{f(2^n)}+1\right)=2f(2^{n+1})+2f(2^n) \leq 4f(2^{n+1}),$$
where the first inequality follows from the case assumption.

\noindent \emph{Case II: $bc_1c_2c_{2^2}\cdots c_{2^{n-1}} \geq 2f(2^n)$.} Then $c_{2^n}=\lfloor \frac{f(2^{n+1})}{f(2^n)} \rfloor$, so:
$$bc_1c_2c_{2^2}\cdots c_{2^{n-1}}c_{2^n} \leq 4f(2^n)\cdot \frac{f(2^{n+1})}{f(2^n)}=4f(2^{n+1}).$$
As for the other inequality, if furthermore $f(2^{n+1})\geq 2f(2^n)$ then
\begin{eqnarray*}
bc_1c_2c_{2^2}\cdots c_{2^{n-1}}c_{2^n} &  \geq &  bc_1c_2c_{2^2}\cdots c_{2^{n-1}}\left(\frac{f(2^{n+1})}{f(2^n)}-1\right) \\ & \geq &  2f(2^n)\left(\frac{f(2^{n+1})}{f(2^n)}-1\right) = 2f(2^{n+1})-2f(2^n)\geq f(2^{n+1})
\end{eqnarray*}
and if $f(2^{n+1}) < 2f(2^n)$ then
$$bc_1c_2c_{2^2}\cdots c_{2^{n-1}}c_{2^n} \geq bc_1c_2c_{2^2}\cdots c_{2^{n-1}} \geq 2f(2^n) \geq f(2^{n+1}).$$
The proof of the lemma is completed.
\end{proof}

Therefore, for all $n\geq 1$,
\begin{align} \label{eqn:c2n}
    c_{2^n} \leq \lceil \frac{f(2^{n+1})}{f(2^n)}\rceil \leq \lceil f(2^n) \rceil = f(2^n) \leq bc_1c_2c_{2^2}\cdots c_{2^{n-1}}
\end{align}
where the first inequality is by the definition of $c_{2^n}$, the second follows from the submultiplicativity assumption on $f$, and the last one follows from Lemma \ref{lem:3.3}.

\bigskip

We next describe a routine constructing, for each $n\geq 0$, non-empty sets $W(2^n),C(2^n),U(2^n)$ such that
\begin{enumerate}
    \item $W(1)=X$, $C(2^n) \subseteq W(2^n) \subseteq X^{2^n}$ and $W(2^{n+1})=W(2^n)C(2^n)$; \item $|C(2^n)|=c_{2^n},\ |W(2^n)|=bc_1c_2c_{2^2}\cdots c_{2^{n-1}}$; \item Each $U(2^n)$ is a finite sequence of words (not necessarily of the same length) from $\bigcup_{i=1}^{n} W(2^i)$.
    \end{enumerate}

\bigskip

Take $W(1)=X$ and $U(1)=W(1)$, arbitrarily ordered. Notice that $|W(1)|=b$ obeys the rule $|W(2^n)|=bc_1c_2c_{2^2}\cdots c_{2^{n-1}}$ (the product of the $c$'s is empty since $n=0$).

\bigskip

Given $n\geq 0$, assume that $W(2^i),U(2^i)$ have been defined for all $0 \leq i\leq n$ and that $C(2^i)$ has been defined for all $0\leq i < n$. Write $U(2^n) = (u_1,\dots,u_r)$. We explain how to construct $C(2^n)$ and $W(2^{n+1}),U(2^{n+1})$ in each one of the following two cases. 

By assumption, $W(2^n)=W(1)C(1)C(2)C(2^2)\cdots C(2^{n-1})$ has already been constructed and is of cardinality $bc_1c_2c_{2^2}\cdots c_{2^{n-1}}$.

\bigskip

\noindent \emph{Case I: $n=\mu_m$ for some $m$.} Denote $p:=\log_2 |u_1|$, namely, $u_1\in W(2^p)$, and check if $p\leq m$. Suppose that this inequality holds. Then
\begin{align} \label{eqn:ggg}
\lceil \frac{f(2^{n+1})}{f(2^n)}\rceil = \lceil \frac{f(2^{\mu_m+1})}{f(2^{\mu_m})}\rceil \leq \frac{f(2^{\mu_m})}{4f(2^m)} \leq \frac{f(2^n)}{4f(2^p)}
\end{align}
(the middle inequality follows from (\ref{eqn:reallygoestozero}).)
Consider the subset $$Y:=u_1 C(2^p)C(2^{p+1})\cdots C(2^{n-1})\subseteq W(2^n).$$ Notice that
\begin{align}
    |Y| = |C(2^p)||C(2^{p+1})|\cdots |C(2^{n-1})| & = & c_{2^p}c_{2^{p+1}}\cdots c_{2^{n-1}} \\ & = & \frac{bc_1c_2c_{2^2}\cdots c_{2^{n-1}}}{bc_1c_2c_{2^2}\cdots c_{2^{p-1}}} \nonumber \\ 
    & \overset{\text{Lemma}\ \ref{lem:3.3}}{\geq} & \frac{f(2^n)}{4 f(2^p)} \nonumber \\ & \overset{(\ref{eqn:ggg})}{\geq} & \lceil \frac{f(2^{n+1})}{f(2^n)}\rceil \overset{(\ref{eqn:construction})}{\geq} c_{2^n} \nonumber
\end{align}
so we can pick $C(2^n)$ to be an arbitrary subset of size $c_{2^n}$ of $Y$.

We take $W(2^{n+1})=W(2^n)C(2^n)$.

Finally, $U(2^{n+1})$ is obtained from $U(2^n)=(u_1,\dots,u_r)$ by (i) omitting $u_1$, and then (ii) appending $W(2^{n+1})$, ordered arbitrarily, to the end of $U(2^n)$. Thus $U(2^{n+1})$ looks like $(u_2,u_3,\dots,u_r,w_1,\dots,w_s)$, where $W(2^{n+1})=\{w_1,\dots,w_s\}$.

\smallskip

If $p>m$, we move to Case II.

\bigskip

\noindent \emph{Case II.} If either $n\notin \{\mu_1,\mu_2,\dots\}$ or $n=\mu_m$ but $u_1 \in W(2^p)$ for $p>m$ (this is the complement of Case I), we simply take $C(2^n)$ to be an arbitrary subset of size $c_{2^n}$ of $W(2^n)$. This is possible by (\ref{eqn:c2n}), which ensures that $c_{2^n} \leq bc_1c_2c_{2^2}\cdots c_{2^{n-1}}=|W(2^n)|$.

We take $W(2^{n+1})=W(2^n)C(2^n)$. 

Finally, $U(2^{n+1})$ is obtained from $U(2^n)=(u_1,\dots,u_r)$ by appending $W(2^{n+1})$, ordered arbitrarily, to its end. Thus $U(2^{n+1})$ looks like $(u_1,u_2,\dots,u_r,w_1,w_2,\dots,w_s)$, where $W(2^{n+1})=\{w_1,\dots,w_s\}$.

\smallskip

The above routine thus provides us with a system of sets $\{W(2^n),C(2^n),U(2^n)\}_{n=0}^{\infty}$ as desired.

\begin{rem} \label{rem:sometime}
The sets $U(2^n)$ can become arbitrarily long as $n$ grows. However, notice that the first word in $U(2^n)$ will be omitted at some point. Indeed, if $u_1\in W(2^p)$ then, for the first $m$ such that $\mu_m\geq n$ and $m\geq p$, at the stage of constructing $C(2^{\mu_m})$, we omit $u_1$ when defining $U(2^{\mu_m + 1})$ --- as per Case I above.
\end{rem}

\bigskip

Consider the subshift $\mathcal{X}$ generated by the following set of right-infinite words:
$$ W(1)C(1)C(2)C(2^2)\cdots \subseteq X^{\infty}. $$
We aim to prove that $\mathcal{X}$ is minimal (hence defined by a single uniformly recurrent word), and that $f(n)\preceq p_\mathcal{X}(n) \preceq nf(n)$.

\bigskip

\noindent \emph{Minimality.} 
We aim to prove that for every finite factor $u$ of $\mathcal{X}$ there exists some $N_u\geq 1$ such that every factor of $\mathcal{X}$ of length at least $N_u$, is factored by $u$. Evidently, it suffices to prove this claim for every $u\in \bigcup_{n=1}^{\infty} W(2^n)$. Fix $p$ such that $u\in W(2^p)$.

By construction, $u$ belongs to $U(2^{p})$. Furthermore, by Remark \ref{rem:sometime}, for some $n > p$ we have that in $U(2^n)$, the first word is $u_1=u$. Assume that $n$ is the minimal such index. Let $m$ be the smallest integer such that $\mu_m\geq n$ and $m\geq p$. Then by the above routine, Case I, we have that $C(2^{\mu_m})=u_1C(2^p)C(2^{p+1})\cdots C(2^{\mu_m - 1})=uC(2^p)C(2^{p+1})\cdots C(2^{\mu_m - 1})$.

Let $w$ be any factor of $\mathcal{X}$ of length $2^{\mu_m + 2}$. By Lemma \ref{lem:SB}, $w$ factors a word from 
\begin{eqnarray} \label{eqn:WC8} 
& & W(2^{\mu_m+2})C(2^{\mu_m+2})\cup C(2^{\mu_m+2})W(2^{\mu_m+2}) \subseteq \nonumber \\ & & W(2^{\mu_m+2})W(2^{\mu_m+2}) \subseteq \nonumber \\ & &
W(2^{\mu_m+1})C(2^{\mu_m+1})W(2^{\mu_m+1})C(2^{\mu_m+1}) \subseteq \nonumber \\ & & 
W(2^{\mu_m+1})W(2^{\mu_m+1})W(2^{\mu_m+1})W(2^{\mu_m+1}) \subseteq \nonumber \\ & & 
W(2^{\mu_m})C(2^{\mu_m})W(2^{\mu_m})C(2^{\mu_m})W(2^{\mu_m})C(2^{\mu_m})W(2^{\mu_m})C(2^{\mu_m})
\end{eqnarray}
and every length-$2^{\mu_m+2}$ factor of a word from (\ref{eqn:WC8}) contains a factor from $C(2^{\mu_m})$, which in turn contains $u$ as factor.

\bigskip

\noindent \emph{Complexity.} 
By Lemma \ref{lem:SB}, every length-$2^n$ factor of $\mathcal{X}$ is a subword of $W(2^n)C(2^n)\cup C(2^n)W(2^n)$, so: 
\begin{eqnarray*} p_\mathcal{X}(2^n) & \leq & (2^n+1) \cdot |W(2^n)C(2^n)\cup C(2^n)W(2^n)| \\ 
& \leq & (2^n+1)\cdot 2|W(2^n)||C(2^n)| \\ & = & 2(2^n+1)\cdot bc_1c_2c_{2^2}\cdots c_{2^{n-1}} \cdot c_{2^n} \\ & \overset{\text{Lemma}\ \ref{lem:3.3}}{\leq} & 2(2^n+1) \cdot 4f(2^{n+1}).
\end{eqnarray*} 
For the other side of the inequality, notice that every word in $W(2^n)$ is a $2^n$-factor of $\mathcal{X}$, hence $$p_{\mathcal{X}}(2^n)\geq |W(2^n)|=bc_1c_2c_{2^2}\cdots c_{2^{n-1}} \overset{\text{Lemma}\ \ref{lem:3.3}}{\geq} f(2^n).$$ We thus proved that $$f(n)\preceq p_\mathcal{X}(n)\preceq n f(n),$$
(recall Lemma \ref{lem:enough n_k}) as claimed.
\end{proof}

We conclude with the following application to the growth functions of algebras. Notice that the first examples of finitely generated simple groups of subexponential growth were constructed by Nekrashevych in \cite{Nekrashevych simple}.

Let $\Bbbk$ be a field. Given a finitely generated associative $\Bbbk$-algebra $A$, its growth function $\gamma_A(n)$ is defined as follows. Let $V$ be any finite-dimensional subspace of $A$ which generates it (namely, the span of a finite number of generators). Then $\gamma_A(n)=\dim_\Bbbk V^{\leq n}$, the dimension of the space spanned by all products of at most $n$ generators. This function is independent of the choice of $V$ when considered up to asymptotic equivalence. For more on growth of algebras, see \cite{KL}.

\begin{cor}
Let $f\colon \mathbb{N}\rightarrow \mathbb{N}$ be a non-decreasing submultiplicative function such that $\lim_{n\rightarrow \infty} f(n)=\infty$. Then there exists a finitely generated simple algebra $A$ over an arbitrary field, such that $nf(n)\preceq \gamma_A(n) \preceq n^2 f(n)$.
\end{cor}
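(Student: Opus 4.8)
The plan is to feed the minimal subshift produced by Proposition \ref{prop:minimal} into the simple-algebra construction of Smoktunowicz and Bartholdi \cite{SB}, in the recurrent form developed in \cite{BGIsr, BGSel}. First I would apply Proposition \ref{prop:minimal} to the given non-decreasing submultiplicative $f$ to obtain a minimal subshift $\mathcal{X}$ over a finite alphabet with
$$ f(n) \preceq p_{\mathcal{X}}(n) \preceq n f(n). $$
The reason for insisting on a \emph{minimal} subshift, rather than merely a recurrent one, is that minimality is equivalent to uniform recurrence of its words, and it is precisely uniform recurrence that the construction converts into simplicity of the associated algebra: every factor reappears within a bounded window of every sufficiently long factor, which is exactly the navigability needed below.

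Next I would invoke the algebra attached to $\mathcal{X}$: from the uniformly recurrent language one builds a finitely generated associative algebra $A$ over the prescribed (arbitrary) field, whose defining relations are monomial in nature—so that the construction is field-independent—and whose underlying vector space has a basis indexed by factors of $\mathcal{X}$ together with a ``position'' parameter in the manner of a Toeplitz-type enumeration. The key quantitative claim is that the filtered growth function of $A$ satisfies $\gamma_A \sim n \cdot p_{\mathcal{X}}$: at filtration level $n$ one sees not only the $\asymp p_{\mathcal{X}}(n)$ admissible factors but also the $\asymp n$ shifts of each, which accounts for the single extra linear factor. Granting this, the complexity estimates of the first step immediately give the claimed two-sided bound, since $p_\mathcal{X}\succeq f$ yields the lower estimate and $p_\mathcal{X}\preceq nf$ the upper one:
$$ n f(n) \preceq \gamma_A(n) \preceq n \cdot n f(n) = n^2 f(n). $$

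The substance of the argument, and the main obstacle, is the simplicity of $A$ together with the simultaneous control of its growth. I would show that any nonzero two-sided ideal $I$, after multiplying a nonzero element on both sides by suitable monomials, must contain an element whose ``content'' is a prescribed factor of $\mathcal{X}$; uniform recurrence guarantees that such multiplications can always be performed without leaving the language, and iterating the procedure one reaches a generator (or idempotent-like element) of $A$, forcing $I = A$. The delicate point is that this navigation must be carried out with a construction lean enough that adjoining the apparatus needed to kill every proper ideal inflates $\gamma_A$ by only the single factor of $n$ recorded above, rather than by a further power of $n$ or worse. Striking this balance between simplicity and tight growth is exactly what the refinements of \cite{BGIsr, BGSel} over the original construction of \cite{SB} are designed to accomplish, so I would follow those constructions and verify only that the growth window supplied by Proposition \ref{prop:minimal} is compatible with their estimates.
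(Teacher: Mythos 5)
Your first step --- invoking Proposition \ref{prop:minimal} to get a minimal subshift $\mathcal{X}$ with $f(n)\preceq p_{\mathcal{X}}(n)\preceq nf(n)$, and aiming for an algebra with $\gamma_A\sim n\,p_{\mathcal{X}}(n)$ --- matches the paper. But the algebra you propose to attach to $\mathcal{X}$ cannot work. An algebra ``whose defining relations are monomial in nature,'' in the style of \cite{SB}, \cite{BGIsr}, \cite{BGSel}, is a graded algebra generated in positive degrees, and such an algebra is \emph{never} simple: the augmentation ideal $A_+$ (the span of all positive-degree elements) is a nonzero proper two-sided ideal, and no amount of multiplying by monomials can escape it, since the ideal generated by any monomial of positive degree never contains $1$. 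This is why your ``navigation'' argument for killing ideals is doomed from the start: uniform recurrence of $\mathcal{X}$ does make the associated monomial algebra prime (as the paper notes, recurrent hereditary languages correspond exactly to prime monomial algebras), and the constructions of \cite{BGIsr}, \cite{BGSel} are indeed designed to produce prime or primitive algebras with controlled growth --- but primeness is the ceiling of what that machinery delivers, not simplicity. Relatedly, the growth of the monomial algebra of $\mathcal{X}$ is $P_{\mathcal{X}}(n)=\sum_{i\le n}p_{\mathcal{X}}(i)$, which need not be $\sim n\,p_{\mathcal{X}}(n)$ (e.g.\ when $p_{\mathcal{X}}$ grows exponentially), so even the quantitative claim is tied to the wrong object.

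The paper's actual route supplies exactly the ingredient you are missing. One first replaces the one-sided subshift by a bi-infinite word $\mathcal{W}\in\Sigma^{\mathbb{Z}}$ with the same finite factors, so as to have a minimal action $\mathbb{Z}\curvearrowright\mathcal{X}$ (a step your proposal skips, but which is needed to have a group action at all). One then takes $A=k[\mathfrak{G}_{\mathcal{X}}]$, the convolution algebra of the \'etale groupoid of this action --- an algebraic crossed-product-type object, not a monomial algebra --- and invokes Nekrashevych \cite{N}: minimality (together with aperiodicity, automatic for an infinite minimal subshift) yields simplicity of $A$ over an arbitrary field, and the same paper computes the growth to be $\gamma_A(n)\sim n\,p_{\mathcal{X}}(n)$. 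Interestingly, your heuristic count ``$\asymp p_{\mathcal{X}}(n)$ factors times $\asymp n$ positions'' is precisely the correct count for the groupoid algebra, where the extra linear factor comes from the $\mathbb{Z}$-translation part of the groupoid; it is just not a count available in the monomial world. With \cite{N} in hand, the two-sided bound $nf(n)\preceq\gamma_A(n)\preceq n^2f(n)$ follows immediately from Proposition \ref{prop:minimal}, with no hand-built ideal analysis required.
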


\begin{proof}
Let $\Bbbk$ be an arbitrary base field.
Consider a minimal subshift $\mathcal{X}$ over an alphabet $X$ as in Proposition \ref{prop:minimal}, whose complexity function satisfies $f(n)\preceq p_{\mathcal{X}}(n) \preceq n f(n)$.

We can find a single bi-infinite word $\omega \in \Sigma^{\mathbb{Z}}$ sharing the same finite factors with $\mathcal{X}$ and thus, replacing $\mathcal{X}$ by the closure of the shift-orbit of $\omega$, we may regard $\mathcal{X}$ as a minimal $\mathbb{Z}$-subshift. 

Nekrashevych \cite{N} associated with every uniformly recurrent bi-infinite word $w$, a finitely generated simple algebra $A_w$ such that $\gamma_{A_w}(n) \sim np_{\mathcal{X}}(n)$. In fact, $A_w$ arises as the convolution algebra $A_w=\Bbbk[\mathfrak{G}_\mathcal{X}]$ of the \'etale groupoid $\mathfrak{G}_\mathcal{X}$ of the action $\mathbb{Z}\curvearrowright \mathcal{X}$, where $\mathcal{X}$ is the closure of the shift-orbit of $w$.

Now consider $w = \omega$. By \cite{N}, $A_\omega$ is a simple $k$-algebra with growth rate $\gamma_A(n)\sim np_\omega(n)=np_{\mathcal{X}}(n)$. It follows that
$$ nf(n)\preceq \underbrace{np_{\mathcal{X}}(n)}_{\sim \gamma_A(n)} \preceq n^2 f(n), $$
as claimed.
\end{proof}


\begin{thebibliography}{99}


\bibitem{Alo}
J.-P. Allouche, \textit{Sur la complexit\'e des suites infinies}, Bull. Belg. Math. Soc. 1, 133--143 (1994).


\bibitem{BZ}
J.~P.~Bell, E.~Zelmanov, \textit{On the growth of algebras, semigroups, and hereditary languages},  Inventiones mathematicae, 224, 683--697 (2021).

\bibitem{BBL}
A. Y. Belov, V. V. Borisenko, 
N. Latyshev, \textit{Monomial algebras}, J. Math. Sci. 87 (3), 3463--3575 (1997).

\bibitem{Ber}
G. M. Bergman, \textit{A note on growth functions of algebras and semigroups}, University of California, Berkeley,
1978, unpublished notes.

\bibitem{Cas2}
J. Cassaigne, \textit{Complexit\'e et facteurs sp\'eciaux}, Bull. Belg. Math.
Soc. 4 (1997), 67--88.


\bibitem{Cas}
J. Cassaigne, \textit{Constructing infinite words of intermediate complexity}, in: Ito, M., Toyama, M. (eds) Developments in Language Theory. DLT 2002. Lecture Notes in Computer Science, vol 2450. Springer, Berlin, Heidelberg (2003).

\bibitem{Cas3}
J. Cassaigne, \textit{Special factors of sequences with linear subword complexity},
In Developments in language theory, II (Magdeburg, 1995), 25--34. Word Sci.
Publishing, 1996.


\bibitem{Fer99}
S. Ferenczi, \textit{Complexity of sequences and dynamical systems}, Discrete Mathematics 206, 145--154 (1999).

\bibitem{Fer96}
S. Ferenczi, \textit{Rank and symbolic complexity}, Ergodic Theory and Dynamical Systems 16 (4), 663--682 (1996).










\bibitem{BGSel}
B. Greenfeld, \textit{Gaps and approximations in the space of growth functions}, Selecta Math. (N.S.) 29 (2023), no. 4, Paper No. 62, 34 pp.



\bibitem{BGIsr}
B. Greenfeld, \textit{Prime and primitive algebras with prescribed growth types}, Israel J. Math. 220, no. 1, 161–174 (2017).


\bibitem{Gri1}
R. I. Grigorchuk, \textit{Degrees of growth of finitely generated groups and the theory of invariant means}, Izv. Akad. Nauk SSSR Ser. Mat. 48 (5), 939--985 (1984), English translation: Math. USSR-Izv. 25 (2), 259--300 (1985).


\bibitem{Gri2}
R.~I.~Grigorchuk, \textit{On the Hilbert-Poincar\'e series of graded algebras that are associated with groups}, Mat.~Sb., 180(2):207--225, 304, 1989.


\bibitem{Grill}
C. Grillenberger, \emph{Constructions of strictly ergodic systems I. Given entropy}, Z. Wahrscheinlichkeitstheorie und Verw. Gebiete 25, 323--334 (1972/3).

\bibitem{Gromov}
M. Gromov, \textit{Groups of polynomial growth and expanding maps (with
an appendix by Jacques Tits)}, Publications mathématiques de l’I.H.É.S., tome 53, 53--78 (1981).

\bibitem{Gromov2}
M. Gromov, \textit{Hyperbolic groups, Essays in Group Theory},
S. Gersten ed., MSRI Publications 8, 75--265 (1987).



\bibitem{KL}
G. R. Krause, T. H. Lenagan, \textit{Growth of algebras and Gelfand-Kirillov dimension}, Revised edition,
Graduate Studies in Mathematics, 22. American Mathematical Society, Providence, RI, 2000.

\bibitem{MM1}
C. Mauduit, C. Moreira, \textit{Complexity of infinite sequences with zero entropy}, Acta Arith. 142, no. 4, 331--346 (2010).

\bibitem{MM2}
C. Mauduit, C. Moreira, \textit{Generalized Hausdorff dimensions of sets of real numbers with zero entropy expansion}, Ergodic Theory Dyn. Sys. 32, no. 3, 1073--1089 (2012).

\bibitem{MH38}
M. Morse, G. A. Hedlund, \textit{Symbolic dynamics}, Amer. J. Math., 60, 815--866 (1938).

\bibitem{MH}
M. Morse, G. A. Hedlund, \textit{Symbolic dynamics II. Sturmian trajectories}, Amer.
J. Math. 62, 1--42 (1940).

\bibitem{N}
V. Nekrashevych, \textit{Growth of étale groupoids and simple algebras}, Int. J. Alg. Comp. 26 (2), 375--397 (2016).

\bibitem{Nekrashevych simple}
V.~Nekrashevych, \textit{Palindromic subshifts and simple periodic groups of intermediate growth}, Ann.~of Math.~(2) 187~3, 667--719 (2018).

\bibitem{SB}
A. Smoktunowicz, L. Bartholdi, \textit{Images of Golod–Shafarevich algebras with small growth}, Q. J. Math. 65(2), 421--438 (2014).


\bibitem{Tro}
V. I. Trofimov,  \textit{The growth functions of finitely generated semigroups}, 21, 351--360, (1980).



\end{thebibliography}
\end{document}